\author{Daniyar Shamkanov\thanks{Supported by the Russian Foundation for Basic Research, and Russian Presidential Council for Support of Leading Scientific Schools.}\\ \normalsize{\textit{Steklov Mathematical Institute of the Russian Academy of Sciences}}\\
\normalsize{\textit{National
Research University Higher School of Economics}}\\
\normalsize{\textit{daniyar.shamkanov@gmail.com}}\\
}
\title{Circular Proofs for G\"{o}del-L\"{o}b Logic}
\newtheorem{thm}{Theorem}[section]
\newtheorem{prop}[thm]{Proposition}
\newtheorem{lem}[thm]{Lemma}
\theoremstyle{remark}
\newcommand{\tikzmark}[1]{\tikz[overlay,remember picture] \node (#1) {};}
\begin{document}
\maketitle

\begin{abstract}
We present a sequent-style proof system for provability logic $\mathsf{GL}$ that admits so-called circular proofs. For these proofs, the graph underlying a proof is not a finite tree but is allowed to contain cycles. As an application, we establish Lindon interpolation for $\mathsf{GL}$ syntactically.\\\\
\textit{Keywords:} circular proofs, provability logic, sequent calculus.
\end{abstract}

\section{Introduction}
G\"{o}del-L\"{o}b logic $\mathsf{GL}$ is a modal logic, which describes all universally valid principals of the formal provability in Peano arithmetic.
This logic has attracted strong interest and a lot of effort has been directed to the search of an adequate, cut-free sequent calculus for $\mathsf{GL}$ (see \cite{Neg05, Pog09, GorRam12}). In the present paper we propose a sequent-style proof system for $\mathsf{GL}$ that admits so-called circular proofs. A circular proof can be formed from an ordinary derivation by identifying each non-axiomatic leaf of the derivation tree with an identical interior node via so-called "back-links" (see Section 3). These kind of proofs appears to be an interesting alternative to traditional proofs for logics that contain fixed-point operators (cf. \cite{BroGorPet12}).
Since $\mathsf{GL}$ can be seen as a fragment of the modal $\mu$-calculus (see \cite{Ben06, Vis05, AlFa09}), there should be applications of circular proofs to $\mathsf{GL}$. We present one such application in the present paper.

Though G\"{o}del-L\"{o}b logic possesses Lyndon interpolation \cite{Sha11}, there were seemingly no syntactic proofs for this result.  
It is unclear how Lyndon interpolation can be obtained from previously introduced sequent systems \cite{Neg05, Pog09, GorRam12} by direct proof-theoretic arguments because these systems contain inference rules where the principal formula changes polarity from the conclusion to the premise. In this note, we obtain Lyndon interpolation for $\mathsf{GL}$ by applying circular proofs.
  
In the next section, we recall a variant of the standard sequent calculus for $\mathsf{GL}$. Then we introduce the circular proof system and prove its equivalence to the standard one. In the final section, we establish Lyndon interpolation for $\mathsf{GL}$ syntactically.
\section{Preliminaries}

\textit{Formulas} of $\mathsf{GL}$, denoted by $A$, $B$, $C$, are built up as follows:
$$ A ::= P \,\,|\,\, \overline{P} \,\,|\,\, \top \,\,|\,\, \bot \,\,|\,\, (A \wedge A) \,\,|\,\,(A \vee A) \,\,|\,\, \Box A \,\,|\,\, \Diamond A \;, $$
where $P$ and $\overline{P}$ stand for atoms and their complements. 

The \textit{negation} $\overline{A}$ of a formula $A$ is defined in the usual way by De Morgan's laws, the law of double negation and the duality laws for modal operators, i.e. we inductively define
\begin{gather*}
\overline{(P)} := \overline{P},\qquad
\overline{\overline{P}} := P,\\
\overline{\top} := \bot,\qquad
\overline{\bot} := \top,\\
\overline{(A \wedge B)} := (\overline{A} \vee \overline{B}),\qquad
\overline{(A \vee B)} := (\overline{A} \wedge \overline{B}),\\
\overline{\Box A} := \Diamond \overline{A},\qquad
\overline{\Diamond A} := \Box \overline{A}.
\end{gather*}

We also put
\begin{gather*}
A\rightarrow B := \overline{A} \vee B, \qquad A \leftrightarrow B := (A\rightarrow B) \wedge
(B\rightarrow A).
\end{gather*}

The Hilbert-style axiomatization of $\mathsf{GL}$ is as follows:

\textit{Axioms:}
\begin{itemize}
\item[(i)] Boolean tautologies;
\item[(ii)] $\Box (A \rightarrow B) \rightarrow (\Box A \rightarrow \Box B)$;
\item[(iii)] $\Box ( \Box A \rightarrow A) \rightarrow \Box A$.
\end{itemize}

\textit{Rules:} modus ponens, $A / \Box A$. \\

Now we remind the reader a variant of the standard sequent-style formulation of $\mathsf{GL}$. A \textit{sequent} is a finite multiset of formulas denoted by $\Gamma$, $\Delta$, $\Sigma$. Sequents are often written without any curly braces, and the comma in the expression $\Gamma, \Delta$ means the multiset union. 

For a sequent $\Gamma = A_1, \ldots, A_n$, we put
\begin{gather*}
\Diamond \Gamma := \Diamond A_1, \ldots, \Diamond A_n 
\quad \text{and} \quad
\Gamma^\sharp :=
\begin{cases}
\bot & \text{if $n =0$,}\\
 A_1 \vee \ldots \vee A_n & \text{otherwise.}
\end{cases}
\end{gather*}

The system $\mathsf{GL_{Seq}}$, which is a sequent-style formulation of $\mathsf{GL}$, is defined by the following initial sequents and inference rules: 

\begin{gather*}
\AXC{ $\Gamma, A, \overline{A} $}
\DisplayProof \qquad
\AXC{ $\Gamma , \top $}
\DisplayProof
\end{gather*}
\begin{gather*}
\AXC{$\Gamma , A $}
\AXC{$\Gamma , B $}
\LeftLabel{$\mathsf{\wedge}$}
\BIC{$\Gamma , A \wedge B$}
\DisplayProof \qquad
\AXC{$\Gamma , A,B $}
\LeftLabel{$\mathsf{\vee}$}
\UIC{$\Gamma , A \vee B$}
\DisplayProof \\\\
\AXC{$\Gamma , \Diamond \Gamma , \Diamond \overline{A} ,A$}
\LeftLabel{$\mathsf{\Box_{GL}}$}
\UIC{$\Diamond \Gamma , \Box A ,\Delta $}
\DisplayProof \;.
\end{gather*}
The standard sequent calculus $\mathsf{GLS}$ for $\mathsf{GL}$ introduced by G.~Sambin and S.~Valentini (\cite{SamVal80, SamVal82}) was based on two-sided sequents $\Gamma \Rightarrow \Delta$ where $\Gamma$ and $\Delta$ were sets. To facilitate proof-theoretic treatment of $\mathsf{GL}$, we deal with one-sided sequents built from multisets, and consider formulas in negation normal form only. 

A syntactic proof of the cut-elimination theorem for $\mathsf{GLS}$ was given by S.~Valentini in \cite{Val83}. For a multiset-based version of $\mathsf{GLS}$, the analogous result was proved by R.~Gor\'{e} and R.~Ramanayake in \cite{GorRam12}. From the result of \cite{GorRam12} and admissibility of structural rules for $\mathsf{GL_{Seq}}$, we have the cut-elimination theorem for $\mathsf{GL_{Seq}}$ immediately. Hence we have the following propositions:
\begin{prop} \label{inf-cut}
The L\"{o}b rule and the cut rule
\[\AXC{$\Gamma , \Diamond \Gamma , \Diamond \overline{A} ,A$} \LeftLabel{$\mathsf{L\ddot{o}b}$} \UIC{$\Gamma , \Diamond \Gamma ,A$} \DisplayProof \qquad \AXC{$\Gamma, A$} \AXC{$\Gamma, \overline{A}$} \LeftLabel{$\mathsf{cut}$} \BinaryInfC{$\Gamma$} \DisplayProof\]
are admissible for $\mathsf{GL_{Seq}}$. 
\end{prop}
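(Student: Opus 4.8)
The statement packages two essentially independent claims, so the plan is to treat them separately. Admissibility of cut is not something I would reprove here: the paragraph preceding the proposition already records that cut elimination for $\mathsf{GL_{Seq}}$ follows from the cut-elimination theorem for the multiset reformulation of $\mathsf{GLS}$ in \cite{GorRam12} together with the (height-preserving) admissibility of weakening and contraction in $\mathsf{GL_{Seq}}$, and I would simply invoke that. So the real content is the L\"{o}b rule, and the idea is to derive it from the modal rule $\mathsf{\Box_{GL}}$ by a single application of cut, and then appeal to admissibility of cut and of weakening to conclude admissibility of L\"{o}b itself.

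Concretely, suppose the premise $\Gamma, \Diamond\Gamma, \Diamond\overline{A}, A$ of the L\"{o}b rule is derivable in $\mathsf{GL_{Seq}}$. Applying $\mathsf{\Box_{GL}}$ with empty side context $\Delta$ produces a derivation of $\Diamond\Gamma, \Box A$, and the admissible weakening rule turns this into $\Gamma, \Diamond\Gamma, A, \Box A$. Since $\overline{\Box A} = \Diamond\overline{A}$, and since $\Gamma, \Diamond\Gamma, A, \Diamond\overline{A}$ is the same multiset as the assumed premise $\Gamma, \Diamond\Gamma, \Diamond\overline{A}, A$, a single cut on $\Box A$ yields exactly $\Gamma, \Diamond\Gamma, A$, the conclusion of the L\"{o}b rule:
\[
\AXC{$\Gamma, \Diamond\Gamma, \Diamond\overline{A}, A$}
\LeftLabel{$\mathsf{\Box_{GL}}$}
\UIC{$\Diamond\Gamma, \Box A$}
\LeftLabel{$\mathsf{w}$}
\UIC{$\Gamma, \Diamond\Gamma, A, \Box A$}
\AXC{$\Gamma, \Diamond\Gamma, \Diamond\overline{A}, A$}
\LeftLabel{$\mathsf{cut}$}
\BIC{$\Gamma, \Diamond\Gamma, A$}
\DisplayProof
\]
where $\mathsf{w}$ denotes weakening. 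Thus the L\"{o}b rule is derivable in $\mathsf{GL_{Seq}}$ extended with weakening and cut; since both of these rules are admissible for $\mathsf{GL_{Seq}}$, the L\"{o}b rule is admissible as well.

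I do not expect a serious obstacle in this proposition: the genuinely delicate step — cut elimination for $\mathsf{GL}$, awkward because the modal rule deletes formulas in passing from conclusion to premise — is entirely outsourced to \cite{GorRam12}. The only point that deserves to be spelled out rather than asserted is the passage between the original set-based two-sided calculus $\mathsf{GLS}$ and the multiset-based one-sided system $\mathsf{GL_{Seq}}$, namely that weakening and contraction are admissible in $\mathsf{GL_{Seq}}$ (the sole nontrivial case being $\mathsf{\Box_{GL}}$, where weakening is absorbed into the side context $\Delta$) and that this translation is compatible with cut; everything else is routine bookkeeping.
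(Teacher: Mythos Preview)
Your proposal is correct and matches the paper's intent: the paper gives no explicit proof of this proposition at all, treating it as an immediate consequence of the preceding paragraph (cut elimination for $\mathsf{GL_{Seq}}$ via \cite{GorRam12} plus admissibility of structural rules). Your explicit derivation of the L\"{o}b rule from $\mathsf{\Box_{GL}}$, weakening, and a single cut on $\Box A$ is exactly the natural way to unpack the paper's ``Hence'', and the bookkeeping is accurate.
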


\begin{prop} \label{prop}
$\mathsf{GL_{Seq}} \vdash \Gamma \Longleftrightarrow \mathsf{GL} \vdash \Gamma^\sharp $. 
\end{prop}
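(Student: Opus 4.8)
The plan is to prove the two implications separately. For $(\Rightarrow)$ I would induct on a cut-free $\mathsf{GL_{Seq}}$-derivation of $\Gamma$, showing at each step that $\Gamma^\sharp$ is a theorem of the Hilbert calculus. For $(\Leftarrow)$ I would show that $\mathsf{GL_{Seq}}$ simulates the Hilbert calculus, i.e.\ that every $\mathsf{GL}$-theorem $\varphi$ is derivable as the one-formula sequent $\varphi$, and then pass from $\Gamma^\sharp$ to $\Gamma$ by invertibility of the $\vee$-rule; here I would use the admissibility of cut and of the structural rules granted by Proposition~\ref{inf-cut} and the surrounding discussion.

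For $(\Rightarrow)$, the initial sequents translate to Boolean tautologies and the $\wedge$- and $\vee$-rules are handled by pure propositional reasoning inside $\mathsf{GL}$, so the only real case is $\mathsf{\Box_{GL}}$. Writing $G := \Gamma^\sharp$, one checks that the induction hypothesis $\mathsf{GL}\vdash(\Gamma,\Diamond\Gamma,\Diamond\overline A,A)^\sharp$ is propositionally equivalent to $\mathsf{GL}\vdash\bigl(\overline{G}\wedge\overline{(\Diamond\Gamma)^\sharp}\wedge\Box A\bigr)\to A$, and that $\overline{(\Diamond\Gamma)^\sharp}$ is $\mathsf{GL}$-provably equivalent to $\Box\overline{G}$ (by distributivity of $\Box$ over conjunction). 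Hence $\mathsf{GL}\vdash(\overline{G}\wedge\Box\overline{G})\to(\Box A\to A)$; applying necessitation, axiom (ii), distributivity of $\Box$ over $\wedge$, and transitivity $\Box\overline{G}\to\Box\Box\overline{G}$ (derivable in $\mathsf{GL}$) yields $\mathsf{GL}\vdash\Box\overline{G}\to\Box(\Box A\to A)$, and axiom (iii) then gives $\mathsf{GL}\vdash\Box\overline{G}\to\Box A$. Reading this back through $\Box\overline{G}\leftrightarrow\overline{(\Diamond\Gamma)^\sharp}$ produces $\mathsf{GL}\vdash(\Diamond\Gamma)^\sharp\vee\Box A$, whence $(\Diamond\Gamma,\Box A,\Delta)^\sharp$ by weakening in a disjunct.

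For $(\Leftarrow)$, the propositional axioms are covered by the standard completeness argument for the Tait-style rules: any multiset whose disjunction is a Boolean tautology is derived by decomposing $\wedge$ and $\vee$ until every formula is a literal or a modal formula, at which stage the sequent must contain $\top$ or a complementary pair $A,\overline A$ and so is initial. The axiom $\Box(A\to B)\to(\Box A\to\Box B)$ follows from a single application of $\mathsf{\Box_{GL}}$ whose premise reduces through $\wedge$ and $\vee$ to initial sequents, and the L\"ob axiom $\Box(\Box A\to A)\to\Box A$ follows similarly via an appropriately nested pair of $\mathsf{\Box_{GL}}$-applications. Closure under necessitation $A/\Box A$ is the instance of $\mathsf{\Box_{GL}}$ with empty $\Gamma$ and $\Delta$ (after weakening in $\Diamond\overline A$), and closure under modus ponens follows from invertibility of the $\vee$-rule together with admissibility of weakening and cut. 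Finally, from $\mathsf{GL_{Seq}}\vdash\Gamma^\sharp$ one recovers $\mathsf{GL_{Seq}}\vdash\Gamma$ by repeatedly inverting the $\vee$-rule, the degenerate case $\Gamma=\varnothing$ being vacuous since $\mathsf{GL}\nvdash\bot$.

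The work here is bookkeeping rather than depth: the one point that needs a little care is making the $\mathsf{\Box_{GL}}$-case of $(\Rightarrow)$ land exactly on axiom (iii), and dually finding the right nesting of $\mathsf{\Box_{GL}}$ to derive the L\"ob axiom as a sequent in $(\Leftarrow)$; everything else is routine modal and propositional manipulation inside $\mathsf{GL}$ together with the cited admissibility results.
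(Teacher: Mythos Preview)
Your argument is correct. The paper, however, does not give a proof of this proposition at all: it is stated together with Proposition~\ref{inf-cut} as an immediate consequence of the cut-elimination theorem for the multiset-based Sambin--Valentini calculus obtained in \cite{GorRam12}, combined with the classical equivalence between $\mathsf{GLS}$ and the Hilbert calculus established in \cite{SamVal80,SamVal82}. In other words, the paper outsources both directions to the literature, whereas you supply an explicit self-contained verification.

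A couple of minor remarks on your write-up. In $(\Rightarrow)$ the system $\mathsf{GL_{Seq}}$ as defined here has no cut rule, so every derivation is already cut-free and you need not invoke cut-elimination for that direction; your induction goes through on arbitrary $\mathsf{GL_{Seq}}$-proofs. In $(\Leftarrow)$, the claim that a sequent of literals and modal formulas whose disjunction is a Boolean tautology must contain $\top$ or a complementary pair $A,\overline A$ deserves one sentence of justification (e.g.\ by exhibiting a falsifying valuation treating each modal formula as a fresh atom); and the admissibility of weakening and of the inverse $\vee$-rule for $\mathsf{GL_{Seq}}$, which you use freely, are not part of Proposition~\ref{inf-cut} but are easy separate inductions. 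None of this affects the soundness of your approach.
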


\section{Non-well-founded and Circular Proofs}
A \textit{derivation} in a sequent calculus is a finite tree whose nodes are marked by sequents that is constructed according to the rules of the sequent calculus. 
In the present section we also deal with proof systems that admit non-well-founded derivation trees, which we call \textit{$\infty$-derivations}. A ($\infty$-)\textit{proof} is defined as a ($\infty$-)derivation, where all leaves are labeled with initial sequents.  

A tree is called \textit{regular} if it contains only finitely many distinct subtrees. Equivalently, a regular tree is a tree that can be obtained by unraveling from a finite directed graph. A \textit{regular $\infty$-derivation} is an $\infty$-derivation with regular derivation tree.

A \textit{circular derivation} is a pair $(\kappa, d)$, where $\kappa$ is an ordinary derivation and $d$ is a \textit{back-link function} assigning to some leaf $x$ an interior node $y$ with an identical sequent such that $y$ lies on the path from the root to the leaf. A \textit{circular proof} is a circular derivation such that every leaf is either connected by the back-link function, or is marked by an initial sequent.

Recall that a sequent calculus for the standard modal logic $\mathsf{K4}$, denoted by $\mathsf{K4_{Seq}}$, is obtained from $\mathsf{GL_{Seq}}$ by replacing the rule $\mathsf{\Box_{GL}}$ by the modal rule:
\[\AXC{$\Gamma , \Diamond \Gamma , A$}
\LeftLabel{$\mathsf{\Box}$}
\UIC{$\Diamond \Gamma , \Box A ,\Delta $}
\DisplayProof \;.
\]
We introduce the systems $\mathsf{GL_{circ}}$ and $\mathsf{GL_{\infty}}$ obtained from $\mathsf{K4_{Seq}}$ by admitting circular proofs and arbitrary $\infty$-proofs, respectively. For example, consider a circular proof for L\"{o}b's axiom $\Box (\Box P \rightarrow P) \rightarrow \Box P$:

\begin{gather*}
   \AXC{$\tikzmark{A}\Box P \wedge \overline{P} , \Diamond (\Box P \wedge \overline{P}) , P
$}
\LeftLabel{$\mathsf{\Box}$} 
\UIC{$\Box P , \Diamond (\Box P \wedge \overline{P}) , P$} 
\AXC{$\overline{P} , \Diamond (\Box P \wedge \overline{P}) , P$}
\LeftLabel{$\mathsf{\wedge}$}
\BIC{$\tikzmark{b} \Box P \wedge \overline{P} , \Diamond (\Box P \wedge \overline{P}) , P $}
\LeftLabel{$\mathsf{\Box}$} 
\UIC{$\Diamond (\Box P \wedge \overline{P}), \Box P$}
\LeftLabel{$\mathsf{\vee}$}
\UIC{$\Diamond (\Box P \wedge \overline{P}) \vee \Box P$}
 \DisplayProof
 \begin{tikzpicture}[overlay,remember picture, >=latex, distance=2.7cm]
    \draw[->, thick] (A.north west) to [out=180,in=245] (b.west);
 \end{tikzpicture}
\end{gather*}
\begin{center}
\textbf{Fig. 1}
\end{center}
We stress that the notion of circular derivation is essentially coincide with the notion of regular $\infty$-derivation. Obviously, every circular derivation can be unraveled to the regular $\infty$-derivation. The converse also holds:
\begin{prop}[Chapter 6 in \cite{Bro06}] \label{circ}
Any regular $\infty$-derivation can be seen as the unraveling of a circular derivation.
\end{prop}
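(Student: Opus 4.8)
The plan is to show that a regular $\infty$-derivation $\pi$ can be converted into a circular derivation whose unraveling is (isomorphic to) $\pi$. First I would use the regularity of $\pi$: by definition it has only finitely many distinct subtrees, so I fix a finite set $S = \{\pi_1, \dots, \pi_n\}$ of representatives of the isomorphism classes of subderivations, with $\pi_1 = \pi$. For each occurrence of a node $x$ in $\pi$ there is a well-defined index $i(x)$ such that the subtree rooted at $x$ is isomorphic to $\pi_{i(x)}$.

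Next I would build the finite graph underlying the circular derivation by a top-down traversal of $\pi$, maintaining along each branch the list of indices already ``visited''. Starting from the root, I copy nodes of $\pi$ one by one; when I reach a node $x$ whose index $i(x)$ has \emph{not} yet appeared on the current branch, I mark it as visited and recurse into its premises exactly as in $\pi$. When I reach a node $x$ whose index $i(x)$ has already appeared at some ancestor $y$ (so $\pi$ has isomorphic subderivations at $x$ and at $y$, and in particular $x$ and $y$ carry the same sequent), I stop the traversal at $x$, declare $x$ a leaf, and set the back-link function $d(x) := y$. Since each branch of the constructed tree strictly increases the set of visited indices until it terminates, and there are only $n$ indices, every branch is finite; hence $\kappa$ is a genuine finite derivation tree. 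By construction every leaf of $\kappa$ is either a leaf of $\pi$ (an initial sequent, in the proof case) or carries a back-link to an ancestor with the same sequent lying on the path from the root, so $(\kappa, d)$ is a circular derivation (respectively, a circular proof).

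Finally I would verify that unraveling $(\kappa,d)$ gives back $\pi$. Unraveling replaces each back-linked leaf $x$ (with $d(x)=y$) by a fresh copy of the subderivation of $\kappa$ rooted at $y$, and iterates; since the subderivation of $\kappa$ rooted at $y$ was built from the subderivation of $\pi$ rooted at $y$, which is isomorphic to the one rooted at $x$, an induction on the structure shows that the unraveling agrees with $\pi$ at every node. Formally I would define a map from nodes of the unraveling to nodes of $\pi$ and check it is an isomorphism of labelled trees, using that back-links always connect nodes with isomorphic subderivations of $\pi$. I expect the main obstacle to be purely bookkeeping: making precise the simultaneous induction that tracks, for each node produced in the construction, \emph{which} node of $\pi$ it came from, so that one can be sure the sequents at a back-linked leaf and its target genuinely coincide and that the unraveling reproduces $\pi$ on the nose rather than merely something provably equivalent to it. This is exactly the content already worked out in Chapter 6 of \cite{Bro06}, so I would either cite it directly or reproduce the short induction.
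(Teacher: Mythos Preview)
The paper does not prove this proposition at all: it is stated with a bare citation to Chapter~6 of \cite{Bro06} and then used as a black box. Your proposal is the standard construction from that reference---traverse the regular tree, stop each branch the first time a subderivation-isomorphism class repeats, and back-link to the earlier occurrence---and it is correct. One small wording point: when you say ``an induction on the structure shows that the unraveling agrees with $\pi$ at every node,'' bear in mind that the unraveling is generally infinite, so what you really want is to exhibit the node map and check it is a label- and child-preserving bijection (a coinductive/bisimulation argument rather than a structural induction); you essentially say this in the next sentence, so this is only a matter of phrasing.
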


The rest of the section is devoted to proving the following result.
\begin{thm} \label{thm}
$\mathsf{GL_{Seq}} \vdash \Gamma \Longleftrightarrow \mathsf{GL_\mathsf{\infty}} \vdash \Gamma \Longleftrightarrow \mathsf{GL_{circ}} \vdash \Gamma$. 
\end{thm}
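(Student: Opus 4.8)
The plan is to establish the three-way equivalence by a cycle of implications, with the modal rules $\mathsf{\Box_{GL}}$ (finite, well-founded proofs) and $\mathsf{\Box}$ (infinitary/circular proofs) being the only point of real difference between the systems. The easy direction is $\mathsf{GL_{circ}} \vdash \Gamma \Rightarrow \mathsf{GL_\infty} \vdash \Gamma$, which is immediate from Proposition \ref{circ}: a circular proof unravels to a regular $\infty$-proof. So it suffices to prove $\mathsf{GL_{Seq}} \vdash \Gamma \Rightarrow \mathsf{GL_{circ}} \vdash \Gamma$ and $\mathsf{GL_\infty} \vdash \Gamma \Rightarrow \mathsf{GL_{Seq}} \vdash \Gamma$.

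For $\mathsf{GL_{Seq}} \vdash \Gamma \Rightarrow \mathsf{GL_{circ}} \vdash \Gamma$, I would argue by induction on the $\mathsf{GL_{Seq}}$-derivation. The only interesting case is an application of $\mathsf{\Box_{GL}}$ with conclusion $\Diamond\Gamma, \Box A, \Delta$ from premise $\Gamma, \Diamond\Gamma, \Diamond\overline{A}, A$. The idea is to simulate the implicit induction in $\mathsf{\Box_{GL}}$ by an explicit back-link in $\mathsf{GL_{circ}}$. Concretely, starting from $\Diamond\Gamma, \Box A, \Delta$, apply the $\mathsf{K4_{Seq}}$-rule $\mathsf{\Box}$ to reduce to $\Gamma, \Diamond\Gamma, A$; this is exactly the L\"ob-style situation. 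By the induction hypothesis we have a $\mathsf{GL_{circ}}$-proof $\pi$ of $\Gamma, \Diamond\Gamma, \Diamond\overline{A}, A$. Inside $\pi$, wherever the formula $\Diamond\overline{A}$ is actually \emph{used} as a principal formula of a $\mathsf{\Box}$-rule --- i.e. in a step with conclusion $\ldots, \Diamond\overline{A}, \Box A, \ldots$ --- one obtains a premise of the form $\overline{A}, \Sigma, \Diamond\Sigma, A$ which is provable already from the sequent $\Gamma, \Diamond\Gamma, A$ appearing at the target node (one checks $\overline A, A$ and the context is absorbed); the essential trick is that the extra $\Diamond\overline{A}$ can be "discharged" by a back-link pointing to that node, so the cycle closes. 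This is precisely what Fig.~1 illustrates for L\"ob's axiom. The bookkeeping of weakening (the $\Delta$ and the precise contexts) must be handled with the admissibility of weakening, which holds for $\mathsf{GL_{Seq}}$ and routinely for $\mathsf{GL_{circ}}$.

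The genuinely hard direction is $\mathsf{GL_\infty} \vdash \Gamma \Rightarrow \mathsf{GL_{Seq}} \vdash \Gamma$, i.e. \emph{soundness} of the infinitary system. Here one cannot use a naive induction because $\infty$-proofs are not well-founded. The standard approach is an "ordinal analysis" / boundedness argument: assign to each node of a (not necessarily regular) $\mathsf{GL_\infty}$-proof an ordinal rank measuring the depth of surrounding material, and show that along every infinite branch the ranks must eventually force the branch to pass through infinitely many $\mathsf{\Box}$-steps; then one shows that a single application of $\mathsf{\Box}$ together with a suffix that is itself an $\mathsf{GL_\infty}$-proof of smaller complexity can be replaced by a genuine $\mathsf{\Box_{GL}}$-application, collapsing the $\infty$-proof to a finite $\mathsf{GL_{Seq}}$-derivation. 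More precisely, I expect to argue: an $\mathsf{GL_\infty}$-proof of $\Gamma$ has, by K\"onig's lemma applied carefully, only finitely many distinct formulas occurring (all subformulas of $\Gamma$); restricting attention to the propositional part, any infinite branch eventually hits a $\mathsf{\Box}$-rule, and the multiset of diamond-formulas can only shrink; running this down, the sub-$\infty$-proof above a topmost $\mathsf{\Box}$ on a branch is an $\mathsf{GL_\infty}$-proof of a sequent $\Gamma', \Diamond\Gamma', A$ with $A$ of strictly smaller modal depth (or: the branch has strictly fewer $\Box$'s above), so by an outer induction on modal depth it is $\mathsf{GL_{Seq}}$-provable, and then one application of $\mathsf{\Box_{GL}}$ (which is admissible-equivalent to $\mathsf{\Box}$ plus the L\"ob rule of Proposition \ref{inf-cut}) yields the conclusion. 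The technical heart --- and the step I expect to fight with --- is the termination measure: proving that \emph{every} infinite branch of a $\mathsf{GL_\infty}$-proof contains infinitely many $\mathsf{\Box}$-applications, and organizing the induction (on modal depth of $\Gamma$, with an inner induction on the finite propositional-rule portions between modal steps) so that it actually bottoms out. Once soundness is in hand, the cycle $\mathsf{GL_{Seq}} \to \mathsf{GL_{circ}} \to \mathsf{GL_\infty} \to \mathsf{GL_{Seq}}$ is complete, and all three are equivalent.
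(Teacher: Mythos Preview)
Your cycle runs in the opposite direction from the paper's, and both of your non-trivial implications have genuine gaps.

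The serious problem is your $\mathsf{GL_\infty}\Rightarrow\mathsf{GL_{Seq}}$ step. The proposed termination measure (modal depth of the sequent) does \emph{not} decrease across the rule $\mathsf{\Box}$: in the premise $\Gamma',\Diamond\Gamma',A$ the block $\Diamond\Gamma'$ is carried over unchanged from the conclusion, so the maximal depth is preserved. The $\infty$-proof of L\"ob's axiom in Fig.~1 already exhibits this: the sequent $\Box P\wedge\overline{P},\Diamond(\Box P\wedge\overline{P}),P$ recurs forever with modal depth $2$. More generally, proof search in $\mathsf{K4_{Seq}}$ does not terminate, so there is no purely syntactic well-founded measure of the kind you sketch; an $\infty$-proof need not even be regular, so there is no finite object to induct on. The paper avoids this entirely by going the other way around the cycle: first $\mathsf{GL_\infty}\Rightarrow\mathsf{GL_{circ}}$ (normalize via admissible contraction so that only finitely many distinct sequents occur, hence the proof is regular), then $\mathsf{GL_{circ}}\Rightarrow\mathsf{GL_{Seq}}$ by a \emph{finite} induction on the underlying derivation $\kappa$, translating to $\mathsf{GL}$-formulas and discharging each back-link with one application of the L\"ob principle.

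Your $\mathsf{GL_{Seq}}\Rightarrow\mathsf{GL_{circ}}$ sketch is also not a proof as written. A back-link must connect \emph{identical} sequents, and your description of ``discharging $\Diamond\overline{A}$ by a back-link'' does not explain how such identical nodes arise inside the inductively given circular proof of $\Gamma,\Diamond\Gamma,\Diamond\overline{A},A$; the contexts at the relevant $\mathsf{\Box}$-steps need not match the target sequent $\Gamma,\Diamond\Gamma,A$. The paper instead proves $\mathsf{GL_{Seq}}\Rightarrow\mathsf{GL_\infty}$ by \emph{corecursion} (not induction): at each $\mathsf{\Box_{GL}}$-step it first applies the admissible L\"ob rule in $\mathsf{GL_{Seq}}$ (Proposition~\ref{inf-cut}) to strip $\Diamond\overline{A}$, obtaining a $\mathsf{GL_{Seq}}$-proof of $\Gamma,\Diamond\Gamma,A$, and only then recurses. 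Since L\"ob admissibility may enlarge the proof, this defines an $\infty$-proof rather than a finite one, which is why the compression to $\mathsf{GL_{circ}}$ is a separate lemma.
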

\begin{lem}
$\mathsf{GL_{Seq}} \vdash \Gamma \Longrightarrow \mathsf{GL_{\infty}} \vdash \Gamma$. 
\end{lem}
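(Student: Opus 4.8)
The plan is to argue by induction on the given $\mathsf{GL_{Seq}}$-derivation of $\Gamma$. Since $\mathsf{GL_\infty}$ is built on $\mathsf{K4_{Seq}}$, it has exactly the same initial sequents and the same rules $\mathsf{\wedge}$ and $\mathsf{\vee}$ as $\mathsf{GL_{Seq}}$, so those cases are immediate: one just reassembles, with the same rule, the $\infty$-proofs of the premises furnished by the induction hypothesis. I would also record at the outset that weakening is admissible for $\mathsf{GL_\infty}$ — the extra formulas are carried down each branch until they reach the first $\mathsf{\Box}$-inference, where they are absorbed into its side context, or an initial sequent, where they vanish into the context — and use this without further comment. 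Thus the whole weight of the lemma falls on the rule $\mathsf{\Box_{GL}}$: from a $\mathsf{GL_\infty}$-proof $\pi$ of $\Gamma,\Diamond\Gamma,\Diamond\overline A,A$ (supplied by the induction hypothesis) I must build a $\mathsf{GL_\infty}$-proof of $\Diamond\Gamma,\Box A,\Delta$. Since $\mathsf{\Box_{GL}}$ is the composite of one application of the L\"ob rule followed by the K4-rule $\mathsf{\Box}$, and $\mathsf{\Box}$ is primitive in $\mathsf{GL_\infty}$, this reduces to showing that the L\"ob rule is admissible for $\mathsf{GL_\infty}$: a $\mathsf{GL_\infty}$-proof of $\Gamma,\Diamond\Gamma,\Diamond\overline A,A$ can be converted into one of $\Gamma,\Diamond\Gamma,A$.

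For that conversion I would perform surgery on $\pi$. Mark the displayed occurrence of $\Diamond\overline A$ in the end-sequent and delete it, together with its trace, all the way up $\pi$. The deletion is harmless at $\mathsf{\wedge}$- and $\mathsf{\vee}$-inferences and at any $\mathsf{\Box}$-inference that merely weakens the marked formula away. It bites in essentially two configurations. The first is a $\mathsf{\Box}$-inference whose modal context carries the marked occurrence, of the form $\dfrac{\overline A,\Theta,\Diamond\overline A,\Diamond\Theta,B}{\Diamond\overline A,\Diamond\Theta,\Box B,\Lambda}$; here removing $\Diamond\overline A$ below forces us also to remove $\overline A$ (and the freshly exposed $\Diamond\overline A$) from the premise and re-derive its conclusion $\Diamond\Theta,\Box B,\Lambda$. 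The second is a leaf $\Theta,\Diamond\overline A,\Box A$ that was an initial sequent only because $\Diamond\overline A=\overline{\Box A}$, which after deletion becomes $\Theta,\Box A$. In each such place the sequent to be re-derived again carries a box of the form $\Box A$, and I re-enter the construction on it. This is the only place the derivation grows, and in general it becomes infinite — that infinite unfolding is exactly how $\mathsf{GL_\infty}$ internalises the single L\"ob step performed by $\mathsf{\Box_{GL}}$ (compare the circular proof displayed above).

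What makes the output a genuine $\mathsf{GL_\infty}$-\emph{proof}, and not merely an $\infty$-derivation, is that the deletion never destroys an axiom: on its own the marked $\Diamond\overline A$ supports no initial sequent — wherever it occurs in one it does so as the complement of some $\Box A$, and that $\Box A$ is precisely what the re-derivation reproduces — so after the surgery every leaf is still an initial sequent. The hard part, I expect, is making this surgery precise: one must define the deletion as a transformation that tracks both the marked occurrence of $\Diamond\overline A$ and the first-level occurrences of $\overline A$ that its eliminations spawn, verify that in every configuration where the deletion bites a box $\Box A$ is indeed exposed so that the construction may legitimately be re-entered, and check that the re-entry is productive, so that its iteration yields a well-defined non-well-founded derivation each node of which is a correct instance of a rule of $\mathsf{GL_\infty}$ and each leaf of which is initial. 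The remaining bookkeeping — the base cases, the $\mathsf{\wedge}$/$\mathsf{\vee}$ cases, and the weakening argument — is routine.
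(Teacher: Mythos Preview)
Your strategy diverges from the paper's in a crucial way, and the divergence introduces a real gap. You frame the argument as an \emph{induction} on the $\mathsf{GL_{Seq}}$-proof: at a $\mathsf{\Box_{GL}}$-inference you invoke the induction hypothesis to obtain a $\mathsf{GL_\infty}$-proof of the premise $\Gamma,\Diamond\Gamma,\Diamond\overline{A},A$, and are then forced to establish that the L\"ob rule is admissible for $\mathsf{GL_\infty}$. The paper instead defines the translation $h$ by \emph{corecursion}, so that at a $\mathsf{\Box_{GL}}$-inference the input is still the finite $\mathsf{GL_{Seq}}$-proof $\tau$ of the premise; it applies Proposition~\ref{inf-cut} (L\"ob admissibility in $\mathsf{GL_{Seq}}$, a consequence of cut-elimination) to obtain a $\mathsf{GL_{Seq}}$-proof $f(\tau)$ of $\Gamma,\Diamond\Gamma,A$, and only then makes the corecursive call $h(f(\tau))$. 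Staying in $\mathsf{GL_{Seq}}$ until the last moment lets the paper use a result already on the table; your induction throws that information away and obliges you to prove L\"ob admissibility in $\mathsf{GL_\infty}$ from scratch.

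That admissibility argument, as you sketch it, does not go through. In your Case~1 you must delete not only the marked $\Diamond\overline{A}$ but also the occurrence of $\overline{A}$ that the $\mathsf{\Box}$-inference exposes in its premise; if $A$ is compound, $\overline{A}$ can itself be principal in a $\wedge$- or $\vee$-inference higher up, and your surgery says nothing about that case. Your claim that ``the sequent to be re-derived again carries a box of the form $\Box A$'' is unsupported in Case~1: the conclusion there is $\Diamond\Theta,\Box B,\Lambda$ with $B$ in general unrelated to $A$. And even in Case~2, where the stripped leaf is $\Theta,\Box A$, ``re-entering the construction'' is not well-defined: applying $\mathsf{\Box}$ backwards yields a premise $\Theta_1,\Diamond\Theta_1,A$ with $\Diamond\Theta_1$ the diamond-part of $\Theta$, but nothing guarantees $\Diamond\Gamma\subseteq\Theta$, so you are neither back at $\Gamma,\Diamond\Gamma,A$ nor in possession of an $\infty$-proof of $\Theta_1,\Diamond\Theta_1,\Diamond\overline{A},A$ to feed into another round. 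The clean fix is the paper's: define $h$ corecursively and invoke Proposition~\ref{inf-cut} at each $\mathsf{\Box_{GL}}$-step, which sidesteps L\"ob admissibility in $\mathsf{GL_\infty}$ entirely.
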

\begin{proof}
Assume $\pi$ is a proof of $\Gamma$ in $\mathsf{GL_{Seq}}$. By corecursion, we define the $\infty$-proof $h(\pi)$ of $\Gamma$ in $\mathsf{GL_{\infty}}$. The function $h$ maps initial sequents into initial sequents and commutes with logical rules.
The case of the modal rule is as follows:
\begin{gather*}
\AXC{$\tau$}
\noLine
\UIC{\vdots}
\noLine
\UIC{$\Delta , \Diamond \Delta , \Diamond \overline{A} ,A$}
\LeftLabel{$\mathsf{\Box_{GL}}$}
\UIC{$\Diamond \Delta , \Box A ,\Sigma $}
\DisplayProof 
\xmapsto{\quad  h \quad}
\AXC{$h(f(\tau))$}
\noLine
\UIC{\vdots}
\noLine
\UIC{$\Delta , \Diamond \Delta  ,A$}
\LeftLabel{$\mathsf{\Box}$}
\RightLabel{ ,}
\UIC{$\Diamond \Delta , \Box A ,\Sigma $}
\DisplayProof 
\end{gather*}
where $f(\tau)$ is a proof of $ \Delta , \Diamond \Delta ,A $ in $\mathsf{GL_{Seq}}$ existed by admissibility of the L\"{o}b rule (Proposition \ref{inf-cut}). 
Then $h(\pi)$ proves $\Gamma$ in $\mathsf{GL_{\infty}}$. 
\end{proof}

We say that an inference rule is admissible for $\mathsf{GL_{\infty}}$ if, for every instance of the rule, the
conclusion is provable whenever all premises are provable.
\begin{lem} \label{inf} The inverses of logical rules of $\mathsf{GL_{\infty}}$
\[\AXC{$\Gamma , A \wedge B $}
\UIC{$\Gamma , A$}
\DisplayProof \qquad
\AXC{$\Gamma , A \wedge B $}
\UIC{$\Gamma , B$}
\DisplayProof \qquad
\AXC{$\Gamma , A \vee B $}
\UIC{$\Gamma , A, B$}
\DisplayProof \qquad
\AXC{$\Gamma , \bot $}
\UIC{$\Gamma $}
\DisplayProof
\] are admissible for $\mathsf{GL_{\infty}}$.
\end{lem}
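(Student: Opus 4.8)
The plan is to prove the admissibility of the four inverted logical rules for $\mathsf{GL_{\infty}}$ simultaneously, by a single corecursive construction. Given an $\infty$-proof $\pi$ of, say, $\Gamma, A \wedge B$, I want to produce an $\infty$-proof of $\Gamma, A$ (and symmetrically for the other three rules). The natural idea is to define, by corecursion, a transformation that pushes the inversion upward through $\pi$: at each node we look at which rule was applied and at whether the distinguished formula $A \wedge B$ (resp.\ $A \vee B$, resp.\ $\bot$) is principal there.

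\smallskip

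First I would set up the corecursion. If the distinguished formula is a side formula of the rule applied at the root of $\pi$ — i.e.\ the rule acts on some other formula of $\Gamma$ — then that same rule can be reapplied after inversion: we recursively invert each premise and glue the rule back on. The only subtlety is the modal rule $\mathsf{\Box}$, whose conclusion has the shape $\Diamond \Delta, \Box C, \Sigma$; here every formula of the conclusion is either a $\Diamond$-formula, a $\Box$-formula, or lies in the weakening context $\Sigma$, so a compound formula $A \wedge B$, $A \vee B$, or $\bot$ being inverted must sit in $\Sigma$, and the inversion simply removes it from $\Sigma$ while leaving the premise untouched. If instead the distinguished formula is principal at the root: for $\Gamma, A \wedge B$ obtained by $\mathsf{\wedge}$ from $\Gamma, A$ and $\Gamma, B$, we just take the left (resp.\ right) premise directly; for $\Gamma, A \vee B$ obtained by $\mathsf{\vee}$ from $\Gamma, A, B$, we take that premise directly; for $\Gamma, \bot$ there is no rule with $\bot$ principal, so this case does not arise at a principal position. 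Finally, if $\pi$ is a single initial sequent $\Sigma', C, \overline{C}$ or $\Sigma', \top$, the result is again an initial sequent of the same form with the distinguished compound formula deleted from the context $\Sigma'$ — note that the complementary pair $C, \overline{C}$ (resp.\ the formula $\top$) cannot \emph{be} the distinguished formula, since $A \wedge B$, $A \vee B$, $\bot$ are none of $\top$ and the pair $C,\overline{C}$ would force the other member to be deleted too, which it is not; in the degenerate subcase where, e.g., $A \wedge B$ coincides with one of a complementary pair we would still have its complement present, and one checks the resulting sequent remains initial after the deletion.

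\smallskip

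The well-definedness of this construction is the point requiring care. Since $\infty$-proofs are possibly non-well-founded, the transformation must be given by \emph{guarded corecursion}: in every branch of the recursion we either terminate immediately (the principal and initial cases) or we emit a rule node and recurse on strictly smaller-depth subproofs in the sense of the premise-parent relation, so the output is a well-defined infinite tree, and a routine check confirms that every leaf of the output is an initial sequent, hence the output is an $\infty$-proof and not merely an $\infty$-derivation. The depth-preservation observation also shows the construction does not get stuck: the distinguished formula is never consumed except at a principal step, so it persists up every branch until (if ever) that branch is principal for it.

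\smallskip

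The main obstacle I anticipate is bookkeeping in the inductive-on-the-rule case combined with the corecursive nature of the objects: one must verify that in \emph{all} rule cases the distinguished compound formula genuinely appears in the premise(s) as a side formula (so that the recursive call is applicable), and that reattaching the rule after inversion yields a legitimate instance of that rule — in particular for $\mathsf{\Box}$ one has to be sure the inverted formula really is in the context part $\Sigma$ and not smuggled into $\Diamond\Delta$ or identified with $\Box C$, which follows just from the syntactic shape of $A\wedge B$, $A\vee B$, $\bot$ versus $\Diamond(-)$, $\Box(-)$. Once these case distinctions are laid out, each individual case is immediate.
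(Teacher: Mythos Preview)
Your approach is exactly the ``simple transformations of proofs'' that the paper invokes without elaboration, so the overall strategy is the intended one and the corecursive scheme is sound. There is one genuine slip, however, in your treatment of initial sequents. When the distinguished formula $A \wedge B$ (or $A \vee B$) is itself one member of the complementary pair in an axiom $\Gamma', C, \overline{C}$, the sequent obtained after inversion is \emph{not} in general an initial sequent. For example, from the axiom $A \wedge B,\ \overline{A} \vee \overline{B}$ the first $\wedge$-inversion must yield $A,\ \overline{A} \vee \overline{B}$, which is not an axiom; your claim that ``one checks the resulting sequent remains initial after the deletion'' is therefore false as stated.

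The repair is trivial---insert a short finite derivation (here a single $\vee$-step from the axiom $A, \overline{A}, \overline{B}$; dually a single $\wedge$-step for the $\vee$-inversion)---or, more cleanly, first reduce all axioms to atomic form $\Gamma, P, \overline{P}$, after which the degenerate subcase cannot arise. With this fix the corecursive construction goes through exactly as you describe.
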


\begin{proof}
Simple transformations of proofs.
\end{proof}

\begin{lem} The rules of weakening and contraction
$$\AXC{$\Gamma$} \LeftLabel{$\mathsf{wk}$} \UIC{$\Gamma, A$} \DisplayProof \qquad \AXC{$\Gamma, A, A$} \LeftLabel{$\mathsf{ctr}$} \UIC{$\Gamma, A$} \DisplayProof $$
are admissible for $\mathsf{GL_{\infty}}$.
\end{lem}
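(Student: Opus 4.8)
I plan to obtain both rules by corecursive transformations of $\infty$-proofs, in the spirit of the ``simple transformations'' behind Lemma~\ref{inf}; the one delicate point is productivity. For weakening I would define, by corecursion on the given proof, a map turning a $\mathsf{GL_{\infty}}$-proof of $\Gamma$ into one of $\Gamma, A$: an initial sequent stays initial after adjoining $A$; at a logical rule the formula $A$ is merely carried along in the side context and the corecursive call is made on the premises; and an application of $\mathsf{\Box}$ with conclusion $\Diamond\Gamma_0, \Box B, \Delta$ and premise $\Gamma_0, \Diamond\Gamma_0, B$ becomes, after adjoining $A$, again an instance of $\mathsf{\Box}$ with the \emph{same} premise (with side context $\Delta, A$), so the subproof above is reused verbatim and $A$ is never pushed through a modal rule. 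Each clause emits an inference before recursing, so this is a legitimate guarded corecursion, and it witnesses admissibility of weakening.

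For contraction the construction is a corecursion on the proof nested inside an induction on the structure of the contracted formula $A$: for each $A$, with contraction already available on all proper subformulas of $A$ and with Lemma~\ref{inf} used as a black box, I would build a map $c_A$ from $\mathsf{GL_{\infty}}$-proofs of $\Gamma, A, A$ to proofs of $\Gamma, A$. The routine clauses are: an initial sequent stays initial after deleting one copy of $A$ --- using that $D \neq \overline{D}$ for every formula, so a witnessing pair can never be the two deleted copies, and similarly for $\top$; a logical rule in which $A$ is not principal, handled by reapplying the rule and recursing with $c_A$ on the premises; and a modal rule in which both copies of $A$ sit in the side context $\Delta$, or in which $A = \Box B$ with one copy the principal box and the other in $\Delta$, handled by reapplying $\mathsf{\Box}$ with a shrunken side context over the \emph{unchanged} subproof. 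The two clauses doing real work are: (i) $A$ principal in a logical rule, say $A = B \vee C$, where the premise proves $\Gamma, B \vee C, B, C$ --- invert $\mathsf{\vee}$ on the spectator occurrence of $B \vee C$ via Lemma~\ref{inf}, apply $c_B$ and $c_C$ (legitimate by the induction hypothesis), then reapply $\mathsf{\vee}$; the conjunctive case is analogous using both $\mathsf{\wedge}$-inversions; and (ii) $A = \Diamond C$ with \emph{both} copies inside the block $\Diamond\Gamma_0$ of a modal rule, so that the premise has the shape $\Gamma_0', C, C, \Diamond\Gamma_0', \Diamond C, \Diamond C, B$ --- here one emits $\mathsf{\Box}$ with the contracted conclusion and then contracts, inside that premise, the pair of $C$'s (by $c_C$, from the induction hypothesis) and the pair of $\Diamond C$'s (by a self-call to $c_{\Diamond C}$, guarded because $\mathsf{\Box}$ has just been emitted).

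The only real obstacle I foresee is checking that this nested definition is well-founded. Case~(ii) --- which is also the reason contraction cannot simply be ``propagated upward'', since a contracted $\Diamond$-formula can hide inside the $\Diamond\Gamma_0$ block of the $\mathsf{\Box}$ rule --- is exactly where $c_A$ calls itself, and that call is guarded by the emitted $\mathsf{\Box}$; the self-call in the non-principal logical clause is likewise guarded; every other auxiliary call is to $c_{A'}$ for a proper subformula $A'$ of $A$, or to weakening, or to Lemma~\ref{inf}, all defined beforehand. Hence each $c_A$ is a legitimate guarded corecursive definition and the whole family is well-defined by the outer structural induction. Since $c_A$ sends a proof of $\Gamma, A, A$ to one of $\Gamma, A$ and weakening sends a proof of $\Gamma$ to one of $\Gamma, A$, both rules are admissible for $\mathsf{GL_{\infty}}$; what remains is routine bookkeeping --- matching sequents after invert/contract/reapply in case~(i), and checking that the ways the two occurrences of $A$ can be distributed between $\Diamond\Gamma_0$ and $\Delta$ in the modal clauses are exhaustive.
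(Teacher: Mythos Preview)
Your proposal is correct and matches the paper's argument: weakening is immediate from the built-in weakening in initial sequents and in the side context of $\mathsf{\Box}$, and contraction is proved by induction on the structure of $A$, with the only nontrivial case $A=\Diamond C$ handled by a guarded corecursion that, at a modal inference where both copies lie in $\Diamond\Gamma_0$, emits $\mathsf{\Box}$, contracts the two $C$'s by the induction hypothesis, and then calls itself on the remaining pair of $\Diamond C$'s. The paper compresses the non-$\Diamond$ cases into ``standard, using Lemma~\ref{inf} and the definition of $\mathsf{\Box}$'', whereas you spell out the invert/contract/reapply routine and the guardedness bookkeeping, but the strategy is the same.
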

\begin{proof}
Admissibility of the weakening rule follows from the built-in weakening within initial sequents and the rule $\Box$. 

We prove the admissibility of contraction by induction on the structure of $A$.

The cases of $P$, $\overline{P}$, $\top$, $\bot$, $\wedge$, $\vee$ and $\Box B$ are established standardly using the induction hypothesis, Lemma \ref{inf}, and the definition of the rule $\Box$.

The case $A = \Diamond B$. Assume $\pi$ is an $\infty$-proof of $ \Gamma \cup \{ \Diamond B , \Diamond B \}$ in $\mathsf{GL_{\infty}}$. By corecursion, we define the $\infty$-proof $h(\pi)$ of $ \Gamma \cup \{ \Diamond B \}$ in $\mathsf{GL_{\infty}}$. The function $h$ maps initial sequents into initial sequents and commutes with logical rules. Consider the case of the modal rule:
\begin{gather*}
\AXC{$\tau$}
\noLine
\UIC{\vdots}
\noLine
\UIC{$\Delta , \Diamond \Delta , C$}
\LeftLabel{$\mathsf{\Box}$}
\RightLabel{ .}
\UIC{$\Diamond \Delta , \Box C ,\Sigma $}
\DisplayProof 
\end{gather*}
If $\Diamond B \in \Sigma$, then $h$ simply erases a copy of $\Diamond B$ from the conclusion of the proof.
Otherwise, $\Delta =  \Delta^\prime \cup \{ B , B \}$ and $h$ maps this $\infty$-proof to the following:
\begin{gather*}
\AXC{$\tau$}
\noLine
\UIC{\vdots}
\noLine
\UIC{$ B, B, \Delta^\prime , \Diamond B , \Diamond B ,\Diamond \Delta^\prime  ,C$}
\LeftLabel{$\mathsf{\Box}$}
\UIC{$\Diamond B  ,\Diamond B  ,\Diamond \Delta^\prime , \Box C ,\Sigma$}
\DisplayProof 
\xmapsto{\quad  h \quad}
\AXC{$h(f(\tau))$}
\noLine
\UIC{\vdots}
\noLine
\UIC{$ B, \Delta^\prime , \Diamond B , \Diamond \Delta^\prime  ,C$}
\LeftLabel{$\mathsf{\Box}$}
\RightLabel{ ,}
\UIC{$\Diamond B  ,\Diamond \Delta^\prime , \Box C ,\Sigma  $}
\DisplayProof 
\end{gather*}
where $f(\tau)$ is an $\infty$-proof obtained from $\tau$ by contracting a copy of $B$ from the conclusion of $\tau$. This transformation is admissible by the inductive hypothesis. 
Then $h(\pi)$ proves $\Gamma \cup \{ \Diamond B\}$ in $\mathsf{GL_{\infty}}$.

\end{proof}
\begin{lem}
$\mathsf{GL_{\infty}} \vdash \Gamma \Longrightarrow \mathsf{GL_{circ}} \vdash \Gamma$. 
\end{lem}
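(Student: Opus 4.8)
The plan is to prove the stronger statement that every $\mathsf{GL_\infty}$-provable sequent has a \emph{regular} $\infty$-proof; the lemma then follows from Proposition \ref{circ}. Indeed, a regular $\infty$-proof, being a regular $\infty$-derivation, can be seen as the unraveling of some circular derivation $(\kappa,d)$; and since $\mathsf{GL_\infty}$ imposes no condition on infinite branches beyond requiring every leaf to be initial, each leaf of $\kappa$ is either back-linked (it unravels to an interior node) or labeled by an initial sequent (it unravels to a leaf of the $\infty$-proof), so $(\kappa,d)$ is in fact a circular proof.

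The first task is to show that a $\mathsf{GL_\infty}$-provable $\Gamma$ has an $\infty$-proof in which only finitely many distinct sequents occur. Inspecting the rules of $\mathsf{K4_{Seq}}$, every premise consists of subformulas of its conclusion together with a verbatim copy of the diamond-context in the case of $\Box$, so every sequent in any $\infty$-proof of $\Gamma$ is a multiset over the finite set of subformulas of $\Gamma$. The only obstruction to finiteness is that the $\vee$- and $\Box$-rules can increase multiplicities without bound along an infinite branch. To remove it I would use the admissibility of weakening and contraction established above: define, by corecursion on a given $\infty$-proof, a transformation producing an $\infty$-proof all of whose sequents are duplicate-free, by eagerly applying the contraction transformation (together with the appropriate inversion from Lemma \ref{inf} and weakening) whenever an inference would introduce a repeated formula into a premise. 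Each step of this transformation performs only finitely many such auxiliary transformations before emitting an inference and recursing, so it is productive; its output is an $\infty$-proof of the duplicate-free variant of $\Gamma$ with all sequents drawn from the finite collection of sets of subformulas of $\Gamma$.

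Given an $\infty$-proof $\pi$ whose sequents range over a finite set $\mathcal S$, one regularizes it as follows: for each $S \in \mathcal S$ occurring in $\pi$ fix one occurrence $x_S$ of an $S$-labeled node, and build a new derivation in which, at a node labeled $S$, one reapplies the inference used at $x_S$ and continues at the nodes $x_{S_1},\dots,x_{S_k}$ corresponding to its premises. This is exactly the unraveling of the finite directed graph on $\mathcal S$ with edges $S \mapsto (S_1,\dots,S_k)$, hence a regular $\infty$-derivation; its leaves are the nodes where the chosen inference has no premises, hence are labeled by initial sequents, so it is a regular $\infty$-proof. Composing this with the first task and Proposition \ref{circ} yields a circular proof of the duplicate-free variant of $\Gamma$, and then a circular proof of $\Gamma$ itself by admissibility of weakening in $\mathsf{GL_{circ}}$.

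I expect the main obstacle to be the contraction step in the first task. Since contraction is only admissible and not a primitive rule of $\mathsf{GL_\infty}$, keeping multiplicities bounded is not a syntactic insertion of structural inferences but a nested corecursive proof transformation, whose well-definedness and productivity must be verified with care — in particular in the $\Box$-case, where the diamond-context simultaneously persists and is un-diamonded, so that only eager contraction prevents the multiplicities from growing unboundedly.
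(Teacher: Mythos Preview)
Your overall strategy---use admissible contraction to obtain an $\infty$-proof with only finitely many distinct sequents, argue that such a proof can be regularized, then invoke Proposition~\ref{circ}---is exactly the paper's approach, and your regularization paragraph is in fact more explicit than the paper's one-line ``a regular $\infty$-proof can be obtained immediately.''

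Where your plan diverges, and where it hits a genuine snag, is the insistence that \emph{every} sequent of the transformed proof be duplicate-free. In the multiset-based system this is impossible: the sequent $\overline{p},\, p\vee p$ is provable, but its only derivation applies $\vee$ with premise $\overline{p}, p, p$, which unavoidably contains a duplicate. More generally, whenever $\vee$ is applied to a duplicate-free conclusion $\Gamma', A\vee B$ with $A\in\Gamma'$, the premise $\Gamma', A, B$ repeats $A$, and no combination of contraction, inversion and weakening lets you emit a valid $\mathsf{K4_{Seq}}$ inference with that conclusion and a duplicate-free premise (weakening is only admissible, not a rule you can insert).

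The paper avoids this by contracting \emph{only} at the $\Box$-rule: the built-in weakening in the conclusion $\Diamond\Delta,\Box A,\Sigma$ lets one replace the premise $\Delta,\Diamond\Delta,A$ by $\Delta^S,\Diamond\Delta^S,A$ while leaving the conclusion unchanged (the excess $\Diamond\Delta'$ moves into $\Sigma$). For the propositional rules no normalization is needed, because each premise of $\wedge$ or $\vee$ has strictly smaller total symbol count than its conclusion; hence between any two consecutive $\Box$-applications only boundedly many sequents can occur. Since after the transformation there are only finitely many possible $\Box$-premises (sets of subformulas), finitely many sequents occur overall. A side benefit is that the transformed proof still has root $\Gamma$, so your final weakening step in $\mathsf{GL_{circ}}$ becomes unnecessary.
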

\begin{proof}
For a sequent $\Delta$, we denote its underlying set by $\Delta^S$. We have $\Delta = \Delta^S , \Delta^\prime$. 

Let $\pi$ be an $\infty$-proof of $\Gamma$ in $\mathsf{GL_{\infty}}$. By corecursion, we define the $\infty$-proof $h(\pi)$ of $\Gamma$, which contains only finitely many different sequents. The function $h$ maps initial sequents into initial sequents and commutes with logical rules.
The case of the modal rule is as follows:
\begin{gather*}
\AXC{$\tau$}
\noLine
\UIC{\vdots}
\noLine
\UIC{$\Delta, \Diamond \Delta , A $}
\LeftLabel{$\mathsf{\Box}$}
\UIC{$\Diamond \Delta , \Box A ,\Sigma $}
\DisplayProof 
\xmapsto{\quad  h \quad}
\AXC{$h(f(\tau))$}
\noLine
\UIC{\vdots}
\noLine
\UIC{$\Delta^S , \Diamond  \Delta^S  ,A$}
\LeftLabel{$\mathsf{\Box}$}
\RightLabel{ ,}
\UIC{$\Diamond \Delta^S , \Box A  , \Sigma , \Diamond \Delta^\prime $}
\DisplayProof 
\end{gather*}
where $f(\tau)$ is an $\infty$-proof of $\Gamma^S , \Diamond  \Gamma^S  ,A$ in $\mathsf{GL_{\infty}}$. This proof exists by the admissibility of the contraction rule. 

Note that all formulas from $h(\pi)$ are subformulas of the formulas from $\Gamma$. Since there are only finitely many different premises of the rule $\Box$ in $h(\pi)$, and, for the other rules, conclusions are longer than premises, we have that $h(\pi)$ contains only finitely many different sequents. From any $\infty$-proof with finitely many different sequents, a regular $\infty$-proof can be obtained immediately. Hence, we have a circular proof of $\Gamma$ by Proposition \ref{circ}.

\end{proof}

For a circular derivation  $\pi = (\kappa, d)$, the set of assumption leafs of $\pi$ is the set of non-axiomatic leafs of $\kappa$ that are not connected by the back-link function $d$. We say that an assumption leaf is boxed if there is an application of the rule $\Box$ on the path from the root to the leaf. Let $BH(\pi)$ be the set of boxed assumption leafs of $\pi$, and $H(\pi)$ be the set of all other assumption leafs. For a subderivation $\tau$ of $\kappa$, we define the back-link function $d_\tau$ as the the set of all links from $d$ with images inside $\tau$.

Recall that $\boxdot A$ is an abbreviation for $A \wedge \Box A$.
\begin{lem}
$\mathsf{GL_{circ}} \vdash \Gamma \Longrightarrow \mathsf{GL_{Seq}} \vdash \Gamma$. 
\end{lem}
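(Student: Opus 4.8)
The plan is to remove the back-links of a $\mathsf{GL_{circ}}$-proof one "loop" at a time, each removed loop being paid for by a single application of the L\"ob rule, which is admissible for $\mathsf{GL_{Seq}}$ by Proposition~\ref{inf-cut}; what survives at the end is an ordinary $\mathsf{GL_{Seq}}$-derivation. Two elementary observations will be used throughout. First, read bottom-up, every rule of $\mathsf{K4_{Seq}}$ other than $\Box$ strictly decreases the total size of the formulas occurring in the sequent; hence no cycle of a circular derivation can be made up of logical rules alone, so every cycle passes through an application of $\Box$ and, in particular, every back-linked leaf is boxed. Second, the modal rule $\Box$ of $\mathsf{K4_{Seq}}$ is subsumed by $\Box_{GL}$: since weakening is admissible for $\mathsf{GL_{Seq}}$, a $\mathsf{GL_{Seq}}$-proof of $\Gamma,\Diamond\Gamma,A$ can be turned into one of $\Gamma,\Diamond\Gamma,\Diamond\overline{A},A$, and then $\Box_{GL}$ yields $\Diamond\Gamma,\Box A,\Delta$. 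Consequently, a circular proof with no back-links is, after these local weakenings, an ordinary $\mathsf{GL_{Seq}}$-proof.

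The argument then proceeds by induction on the number of back-links of the given $\mathsf{GL_{circ}}$-proof $\pi=(\kappa,d)$ of $\Gamma$, the back-link-free case being the observation just made. For the inductive step I would choose a back-link target $y$ of maximal depth among all back-link targets; writing $\Sigma$ for the sequent at $y$, $\tau$ for the subderivation rooted at $y$, and $x_1,\dots,x_k$ for all leaves back-linked to $y$ (each of which is boxed), maximality guarantees that $(\tau,d_\tau)$ is a circular derivation of $\Sigma$ every back-link of which points to its root. The crux is to convert $(\tau,d_\tau)$ into a genuine $\mathsf{GL_{Seq}}$-derivation of $\Sigma$ from the leaves of $\tau$ that $d$ links to proper ancestors of $y$, i.e.\ to "close the loop at $y$" using one application of the L\"ob rule; plugging the result back into $\kappa$ in place of $\tau$ deletes the $\Sigma$-loop, strictly decreases the number of back-links, and leaves a circular proof to which the induction hypothesis applies.

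To carry out this closure I expect to combine the first observation and invertibility of the logical rules to reduce $y$ to the conclusion $\Diamond\Theta,\Box A,\Delta$ of the $\Box$-application that the loop is forced to contain, so that, after peeling off that $\Box$, each copy of $\Sigma$ at the $x_i$ reappears above the premise $\Theta,\Diamond\Theta,A$; then, by folding into the "$\Theta$"-part everything that decorates the return path — for a boxed assumption leaf with sequent $\Phi$ one carries the formula $\overline{\boxdot\Phi^\sharp}$ along, recording that once such a leaf has passed under a $\Box$ both $\Phi^\sharp$ and $\Box\Phi^\sharp$ become available — the copies of $\Sigma$ get absorbed through a single use of L\"ob in the form $\Gamma,\Diamond\Gamma,\Diamond\overline{A},A\,/\,\Gamma,\Diamond\Gamma,A$, together with admissibility of weakening, contraction and cut for $\mathsf{GL_{Seq}}$.

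The main obstacle is precisely this closure step, and behind it the design of the invariant propagated through $\kappa$: it must pass unchanged through the logical rules, it must upgrade correctly when it crosses a $\Box$, and yet it must be dischargeable once the back-link target is reached — and since a naive discharge is impossible at a logical node (for instance, nothing lets one infer $\vdash P$ from $\vdash P,\overline{\boxdot P}$), the whole reduction has to be organised so that a loop is always closed at a $\Box$-rule, which is what the first reduction step above arranges. Granting this, the remaining cases of the induction are routine — apply the induction hypothesis to the premise(s) and reapply the rule, using weakening to align the carried formulas in the two-premise case — and Theorem~\ref{thm} follows at once by chaining the four lemmas: $\mathsf{GL_{Seq}}\vdash\Gamma\Rightarrow\mathsf{GL_\infty}\vdash\Gamma\Rightarrow\mathsf{GL_{circ}}\vdash\Gamma\Rightarrow\mathsf{GL_{Seq}}\vdash\Gamma$.
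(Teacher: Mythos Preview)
Your strategy differs from the paper's: you induct on the number of back-links, closing the innermost loop with one application of L\"ob and then plugging the resulting $\mathsf{GL_{Seq}}$-derivation back into $\kappa$. The paper instead performs a single structural induction on $\kappa$, translating each subderivation $(\tau,d_\tau)$ into a $\mathsf{GL}$-implication
\[
\bigwedge\{\boxdot\Delta_a^\sharp : a\in H(\tau,d_\tau)\}\ \wedge\ \bigwedge\{\Box\Delta_a^\sharp : a\in BH(\tau,d_\tau)\}\ \longrightarrow\ \Gamma^\sharp,
\]
so that when the current root is a back-link target the back-linked leaves contribute a conjunct $\Box\Gamma^\sharp$ on the left, which L\"ob immediately discharges. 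Everything happens in the Hilbert system; no sequent derivations are ever spliced together.

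Your closure step, which you correctly flag as the crux, has a structural gap rather than a missing routine detail. First, there is in general no single ``$\Box$-application that the loop is forced to contain'': $\tau$ may branch at its root (by $\wedge$) before any $\Box$ is reached, with the leaves $x_1,\dots,x_k$ sitting above distinct $\Box$-instances in different branches, so invertibility of the logical rules will not reduce the situation to one L\"ob at one $\Box$. Second, and more seriously, your two moves conflict: to absorb the $x_i$ you propose to carry side formulas $\overline{\boxdot\Phi^\sharp}$ through the sequents, but that alters the sequent produced at $y$, so you can no longer plug the result back at node $y$ of $\kappa$ with the remaining back-links (those pointing to proper ancestors of $y$) still well-typed, which is exactly what your outer induction hypothesis requires. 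If you repair the closure step so that it handles branching and tracks the open/boxed assumption leaves correctly, you will find you are carrying out precisely the paper's structural induction on $\tau$; at that point the outer induction on the number of back-links is doing no work, and the argument collapses to the paper's.
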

\begin{proof}
Given a circular derivation $\pi = (\kappa, d)$ of $\Gamma$, we claim that $$\mathsf{GL} \vdash \bigwedge \lbrace \boxdot \Delta_a^\sharp \colon a \in H(\pi) \rbrace \wedge \bigwedge \lbrace \Box \Delta_a^\sharp \colon a \in BH(\pi) \rbrace \rightarrow \Gamma^\sharp \;,$$ where $\Delta_a$ is the sequent of $a$. The proof is by induction on the structure of $\kappa$. 

If $\kappa$ consists of a single sequent, then our claim is obvious. Otherwise, consider the last application of an inference rule in $\kappa$. We have three cases:
\begin{gather*}
\AXC{$\tau_1$}
\noLine
\UIC{\vdots}
\noLine
\UIC{$\Delta, A $}
\AXC{$\tau_2$}
\noLine
\UIC{\vdots}
\noLine
\UIC{$\Delta, B $}
\LeftLabel{$\wedge$}
\BIC{$\Delta, A \wedge B$}
\DisplayProof \qquad 
\AXC{$\tau$}
\noLine
\UIC{\vdots}
\noLine
\UIC{$\Delta, A, B $}
\LeftLabel{$\vee$}
\UIC{$\Delta, A \vee B$}
\DisplayProof \qquad 
\AXC{$\tau$}
\noLine
\UIC{\vdots}
\noLine
\UIC{$\Delta, \Diamond \Delta, A $}
\LeftLabel{$\Box$}
\UIC{$\Diamond \Delta, \Box A,  \Sigma$}
\DisplayProof 
\end{gather*}
Suppose the conclusion of $\kappa$ is not connected by the back-link function. Recall that
\begin{gather*}
\mathsf{GL} \vdash (\Delta, A)^\sharp \wedge (\Delta, B)^\sharp \rightarrow (\Delta, A \wedge B)^\sharp, \qquad \mathsf{GL} \vdash (\Delta, A, B)^\sharp \rightarrow (\Delta, A \vee B)^\sharp , \\ \mathsf{GL} \vdash \Box (\Delta, \Diamond \Delta, A)^\sharp \rightarrow (\Diamond \Delta, \Box A,  \Sigma)^\sharp .
\end{gather*}
We see that the claim follows from the induction hypothesis for $(\tau, d_\tau)$ (or for $(\tau_1, d_{\tau_1})$ and $(\tau_2, d_{\tau_2})$) immediately.

The only remaining case is that the conclusion of $\kappa$ is connected by the back-link function. Recall that there is always an application of the rule $\Box$ on the path between two nodes connected by the back-link function. By the induction hypothesis and from the previous cases, we have:
$$\mathsf{GL} \vdash \bigwedge \lbrace \boxdot \Delta_a^\sharp \colon a \in H(\pi) \rbrace \wedge \bigwedge \lbrace \Box \Delta_a^\sharp \colon a \in BH(\pi) \rbrace \wedge \Box \Gamma^\sharp \rightarrow \Gamma^\sharp \;.$$ 
Since the L\"{o}b rule is admissible for $\mathsf{GL}$, the assumption $\Box \Gamma^\sharp$ can be dropped out. Hence,  
$$\mathsf{GL} \vdash \bigwedge \lbrace \boxdot \Delta_a^\sharp \colon a \in H(\pi) \rbrace \wedge \bigwedge \lbrace \Box \Delta_a^\sharp \colon a \in BH(\pi) \rbrace \rightarrow \Gamma^\sharp \;.$$
Now if $\pi$ is a circular proof of $\Gamma$, then $\mathsf{GL} \vdash \Gamma^\sharp$. 
By Proposition \ref{prop}, $\mathsf{GL_{Seq}} \vdash \Gamma$. 
\end{proof}

\section{Lyndon Interpolation Syntactically}

Lyndon interpolation for G\"{o}del-L\"{o}b logic was established in \cite{Sha11} on the basis of Kripke semantics. Here we present a proof-theoretic argument for the same result. In fact, we establish a simple strengthening of this property.

Atoms $P$ and their complements $\overline{P}$ are called \textit{literals}. 
By $u (A)$, denote the set of literals $L$ occurring in $A$ out of the scope of all modal operators. For atoms $P$ and their complements $\overline{P}$, we also consider new symbols of the form $P^\circ$ and $\overline{P}^\circ$ and call them \textit{marked literals}. By $v (A)$, denote the set of marked literals $ L^\circ$ such that there is an occurrence of $L$ in $A$ within the scope of a modal operator. Put $w(A) = u(A) \cup v(A)$.

\begin{thm}[Lyndon interpolation] \label{Lyn}
If $\mathsf{GL} \vdash A \rightarrow B$, then there is a formula $C$, called an interpolant of $A \rightarrow B$, such that $w (C) \subset w (A) \cap w (B)$, and \[\mathsf{GL }\vdash A \rightarrow C , \qquad \mathsf{GL} \vdash C \rightarrow B.\] 
\end{thm}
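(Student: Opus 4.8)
The plan is to prove Lyndon interpolation by induction on a cut-free circular proof in $\mathsf{GL_{circ}}$ of the one-sided sequent $\overline{A}, B$, which exists by Theorem~\ref{thm} applied to $\mathsf{GL_{Seq}} \vdash \overline{A}, B$ (itself obtained from $\mathsf{GL} \vdash A \to B$ via Proposition~\ref{prop}, since $(\overline{A},B)^\sharp = \overline{A} \vee B = A \to B$). The advantage of the circular system over $\mathsf{GLS}$ or the other known cut-free calculi is exactly the one flagged in the introduction: every rule of $\mathsf{K4_{Seq}}$ is a positive rule (no principal formula flips polarity between conclusion and premise), so a Maehara-style splitting argument goes through. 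Concretely, I would fix a \emph{partition} of each sequent in the proof into a ``left'' part $\Gamma_1$ (coming from $\overline{A}$) and a ``right'' part $\Gamma_2$ (coming from $B$), and build an interpolant $C$ for the split sequent $\Gamma_1 \mid \Gamma_2$, meaning $\mathsf{GL} \vdash \Gamma_1^\sharp \vee C$, $\mathsf{GL} \vdash \overline{C} \vee \Gamma_2^\sharp$, and $w(C) \subseteq w(\Gamma_1^\sharp \cup \text{(boxed context)}) \cap w(\Gamma_2^\sharp \cup \text{(boxed context)})$ — or rather a version of the Lyndon condition tracking which literals, marked or unmarked, may occur.

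First I would handle the base cases and the propositional rules exactly as in the classical Maehara lemma: for an initial sequent $\Gamma, D, \overline{D}$ one takes $C = \overline{D}$ or $C = D$ depending on which side of the partition the literal $D$ falls (and $C = \bot$ or $C = \top$ for the other degenerate cases); for $\wedge, \vee$ one combines the interpolants of the premises by the dual connective, checking the Lyndon condition on literal occurrences — this is where one must be careful that the unmarked literals $u(\cdot)$ get handled by propositional reasoning and the marked literals $v(\cdot)$ are only introduced through the modal rule. The modal rule $\Box$ is the interesting propositional-style step: given $\Delta, \Diamond\Delta, A$ split as $(\Delta_1, \Diamond\Delta_1, A \mid \Delta_2, \Diamond\Delta_2)$ with interpolant $C$, one produces $\Box C$ (or a boxed variant) as the interpolant of $(\Diamond\Delta_1, \Box A, \Sigma_1 \mid \Diamond\Delta_2, \Sigma_2)$, using the $\mathsf{K4}$-style reasoning $\mathsf{GL} \vdash \Box((\Delta,\Diamond\Delta,A)^\sharp) \to (\Diamond\Delta,\Box A,\Sigma)^\sharp$ together with normality; here every previously unmarked literal of $C$ becomes guarded by a $\Box$, which is precisely why the marked/unmarked distinction in $w(\cdot)$ is the right invariant and why it is preserved.

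The genuinely new difficulty — the main obstacle — is the presence of back-links: a leaf may point back to an interior node, so the induction on proof structure is not literally an induction on a well-founded tree. The fix is to mimic the argument of the last lemma of Section~3 (the one deriving $\mathsf{GL_{circ}} \vdash \Gamma \Rightarrow \mathsf{GL_{Seq}} \vdash \Gamma$): carry assumption leaves as \emph{hypotheses}. That is, I would prove by induction on the finite derivation $\kappa$ underlying $\pi = (\kappa,d)$ that there is an interpolant $C$ with $\mathsf{GL} \vdash \big(\bigwedge_{a \in H(\pi)} \boxdot (C_a) \text{-type hypotheses}\big) \wedge \cdots \to (\Gamma_1^\sharp \vee C)$ and dually, where each assumption leaf contributes a hypothesis built from a fresh interpolant variable / from the interpolant already assigned to the node it will be linked to; then, at a node that is the target of a back-link, one closes the loop by the admissibility of the Löb rule for $\mathsf{GL}$ (again exactly as in Section~3), which lets one discharge the $\Box\Gamma^\sharp$-style hypothesis and simultaneously fixes the interpolant $C$ via a Löb-style fixpoint $C := $ (the formula forced by the back-link), absorbing the $\Box C$ that the intervening $\Box$-rule produced. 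Verifying that this discharge respects the Lyndon condition — i.e. that the fixpoint interpolant $C$ still has $w(C) \subseteq w(A) \cap w(B)$ — is the delicate bookkeeping step, but it works because the back-link connects nodes carrying \emph{identical} sequents, so the left/right partition and hence the admissible literal set can be chosen consistently around the cycle. Finally, applied to the root with empty hypothesis set (a circular \emph{proof} has no assumption leaves), this yields the desired $C$ with $\mathsf{GL} \vdash A \to C$ and $\mathsf{GL} \vdash C \to B$ and $w(C) \subseteq w(A) \cap w(B)$.
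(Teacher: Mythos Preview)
Your plan is essentially the paper's: pass to a split circular proof, run a Maehara-style induction on the underlying finite tree $\kappa$ with fresh atoms $X_a$ standing in for the unknown interpolants at assumption leaves, combine interpolants at propositional and $\Box$ rules as in Fig.~2, and discharge the boxed hypothesis at a back-link via L\"ob. The one real gap is in the back-link step, and it is precisely where the paper does something you only gesture at.

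After the induction you have a formula $C_0(X_b)$ and a provable implication with hypothesis $\Box I_b$, where $I_b$ says ``$X_b$ is an interpolant of the leaf's split sequent''. To apply L\"ob you must first \emph{instantiate} $X_b$: since the leaf and the root carry the same split sequent, you need a formula $F$ with $\mathsf{GL}\vdash \boxdot(X_b\leftrightarrow C_0)\leftrightarrow \boxdot(X_b\leftrightarrow F)$, i.e.\ a fixed point of $C_0$ in $X_b$. The paper invokes the fixed-point theorem for $\mathsf{GL}$ (Theorem~\ref{Fixed-point}) for exactly this, sets $C:=C_0[F/X_b]$, observes $\mathsf{GL}\vdash I_b(F)\leftrightarrow((\overline{\Gamma_1^\sharp}\to C)\wedge(C\to\Gamma_2^\sharp))$, and only then applies L\"ob to drop $\Box I_b(F)$.

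Your justification for the Lyndon condition at this step --- that the back-linked nodes carry identical split sequents, so ``the admissible literal set can be chosen consistently'' --- only identifies the \emph{target} set $w(\overline{\Gamma_1})\cap w(\Gamma_2)$; it does not explain why the fixed point lands inside it. That genuinely requires the variable-preservation clause of Theorem~\ref{Fixed-point} (namely $w(F)\subset w^*(C_0)\setminus\{X_b^\circ\}$, using that $C_0$ contains no occurrence of $\overline{X_b}$), and the paper's inductive invariant is engineered to feed it: one tracks throughout that no $\overline{X_a}$ ever appears and that each $X_a$ with $a\in BH(\pi)$ occurs only under a modality. Without this, the ``delicate bookkeeping'' you allude to does not close.
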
 
For a sequent $\Gamma_1, \Gamma_2$, the expression of the form $\Gamma_1 \: | \;\Gamma_2$ is called its \textit{splitting}. An \textit{interpolant of a split sequent $\Gamma_1\: | \: \Gamma_2$} is defined as an
interpolant of the formula $\overline{(\Gamma^\sharp_1)} \rightarrow \Gamma^\sharp_2$. Thus, it is sufficient to find interpolants for all splittings of provable sequents. 

The standard proof-theoretic strategy of constructing an interpolant is based on the following observations:
\begin{enumerate}
\item given an application of an inference rule of a cut-free sequent calculus, every splitting of the conclusion produces splittings of the premises preserving ancestor relationship;
\item  there is an explicit definition of the interpolant for the split sequent in the conclusion from interpolants of the split sequents in the premises.
\end{enumerate}
For the rules of $\mathsf{GL_{circ}}$, these observations can be summed up in the following split inference system, where in the parentheses we present the process of constructing an interpolant:
\begin{gather*}
\AXC{ $(\top) \;\; \Gamma_1 \; | \; \top, \Gamma_2$}
\DisplayProof \qquad
\AXC{ $(\bot) \;\; \Gamma_1 , \top  \; | \; \Gamma_2$}
\DisplayProof\\\\
\AXC{$(\bot) \;\; \Gamma_1, A, \overline{A} \; | \; \Gamma_2$}
\DisplayProof \qquad
\AXC{$(\overline{A}) \;\;\Gamma_1, A \; | \;\overline{A} , \Gamma_2$}
\DisplayProof \qquad
\AXC{$(\top) \;\;\Gamma_1 \; | \; A, \overline{A}, \Gamma_2 $}
\DisplayProof \\\\
\AXC{$(C) \;\;\Gamma_1 , A \; | \; \Gamma_2  $}
\AXC{$(D) \;\; \Gamma_1 , B \; | \; \Gamma_2 $}
\LeftLabel{$\mathsf{\wedge}_l$}
\BIC{$({C \vee D}) \;\; \Gamma_1 , A \wedge B \; | \; \Gamma_2$}
\DisplayProof  \qquad
\AXC{$(C) \;\; \Gamma_1 , A,B \; | \; \Gamma_2 $}
\LeftLabel{$\mathsf{\vee}_l$}
\UIC{$(C) \;\; \Gamma_1 , A \vee B \; | \; \Gamma_2$}
\DisplayProof 
\end{gather*}
\begin{gather*}
\AXC{$(C) \;\; \Gamma_1 \; | \; \Gamma_2 , A $}
\AXC{$(D) \;\;\Gamma_1 \; | \; \Gamma_2 , B $}
\LeftLabel{$\mathsf{\wedge}_r$}
\BIC{$(C\wedge D) \;\;\Gamma_1 \; | \; \Gamma_2 , A \wedge B$}
\DisplayProof \qquad
\AXC{$(C) \;\; \Gamma_1 \; | \; \Gamma_2  , A,B$}
\LeftLabel{$\mathsf{\vee}_r$}
\UIC{$(C) \;\; \Gamma_1 \; | \; \Gamma_2 , A \vee B$}
\DisplayProof
\end{gather*}
\begin{gather*}
\AXC{$(C) \;\; \Gamma_1 , \Diamond \Gamma_1  ,A \; | \; \Gamma_2 , \Diamond \Gamma_2$}
\LeftLabel{$\mathsf{\Box}_l$}
\UIC{$(\Diamond C) \;\; \Diamond \Gamma_1 , \Box A ,\Delta_1 \; | \; \Diamond \Gamma_2 , \Delta_2 $}
\DisplayProof \quad
\AXC{$(C) \;\; \Gamma_1 , \Diamond \Gamma_1   \; | \; \Gamma_2 , \Diamond \Gamma_2 ,A$}
\LeftLabel{$\mathsf{\Box}_r$}
\UIC{$ (\Box C) \;\; \Diamond \Gamma_1  ,\Delta_1 \; | \; \Diamond \Gamma_2 ,  \Box A ,\Delta_2 $}
\DisplayProof 
\end{gather*}
\begin{center}
\textbf{Fig. 2}
\end{center}
Now the notion of a \textit{split circular derivation} can be defined naturally with the proviso that nodes connected by the back-link function marked by identical split sequents.

\begin{lem}\label{splitproof}
If there is a circular proof of $ \Gamma_1 , \Gamma_2$ in $\mathsf{GL_{circ}}$, then there is a split circular proof of $ \Gamma_1 \: | \: \Gamma_2 $. 

\end{lem}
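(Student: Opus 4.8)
The plan is to take a circular proof $\pi = (\kappa, d)$ of $\Gamma_1, \Gamma_2$ in $\mathsf{GL_{circ}}$ and transform it into a split circular proof of $\Gamma_1 \mid \Gamma_2$ by a straightforward recursion on the tree structure of $\kappa$, reading the inference rules of Fig.~2 from conclusion to premises. The splitting $\Gamma_1 \mid \Gamma_2$ of the root sequent tells us, for each occurrence of a formula in the conclusion of the last rule applied, whether it belongs to the left or the right zone; since every rule of $\mathsf{GL_{circ}}$ is a cut-free rule in which each formula occurrence in a premise is an ancestor of a uniquely determined occurrence in the conclusion, this induced splitting propagates uniquely to the premises. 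So first I would check, rule by rule, that Fig.~2 really does list every way a premise splitting arises from a conclusion splitting: for the propositional rules this is immediate, and for $\mathsf{\Box}$ the side formula $\Box A$ lies in $\Gamma_1$ or in $\Gamma_2$, which dictates whether we use $\mathsf{\Box}_l$ or $\mathsf{\Box}_r$, while the diamond-context $\Diamond\Gamma$ in the conclusion is partitioned as $\Diamond\Gamma_1, \Diamond\Gamma_2$ according to the given root splitting, and correspondingly the premise $\Gamma, \Diamond\Gamma, A$ is split as $\Gamma_1, \Diamond\Gamma_1, A \mid \Gamma_2, \Diamond\Gamma_2$ (or with $A$ on the right). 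The key point is that this assignment is completely determined by the conclusion splitting, so the recursion is well-defined.

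Next I would handle the back-links. Because in a circular derivation a leaf $x$ with back-link $d(x) = y$ carries the same sequent as the interior node $y$ on the path from the root to $x$, and because the splitting of every node is uniquely determined by the splitting of the root by the propagation just described, the leaf $x$ and the node $y$ receive \emph{identical} split sequents. Hence the back-link function $d$ lifts verbatim to a back-link function on the split derivation, and the proviso in the definition of split circular derivation (nodes joined by a back-link carry identical split sequents) is automatically satisfied. Axiomatic leaves — those marked $\Gamma, A, \overline{A}$ or $\Gamma, \top$ — become split axioms in one of the forms listed in the first two rows of Fig.~2, depending on how $A$, $\overline{A}$, and $\top$ fall into the two zones; I would just enumerate the (few) cases. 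Thus the underlying tree and the back-link function of the split circular derivation are exactly those of $\pi$, relabelled with split sequents; this shows it is a split circular \emph{proof} of $\Gamma_1 \mid \Gamma_2$.

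I do not expect a genuine obstacle here: the lemma is really the bookkeeping observation that splittings propagate uniquely through cut-free rules and that this uniqueness is compatible with identifying leaves with interior nodes. The one place that needs a word of care is the diamond rule, since $\mathsf{GL_{circ}}$ uses the $\mathsf{K4}$-style $\mathsf{\Box}$ rule whose premise contains the context $\Diamond\Gamma$ again; one must note that the partition of this re-occurring context matches the partition of the original $\Diamond\Gamma$ in the conclusion, which it does because the ancestor relation maps each $\Diamond A$ in the premise to the corresponding $\Diamond A$ in the conclusion. I would state the construction as a recursion $\pi \mapsto \pi^{sp}$ on $\kappa$ and remark that the interpolant annotations in parentheses in Fig.~2 are not used in this lemma — they only become relevant when one extracts an actual interpolant from the split proof, which is presumably the content of the next lemma. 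So the proof is: define the relabelling by recursion on $\kappa$ following Fig.~2, observe that back-links transfer because matched nodes get identical split sequents, and conclude.
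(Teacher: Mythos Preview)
Your argument has a genuine gap. The claim that a leaf $x$ and its back-link target $y=d(x)$ receive \emph{identical} split sequents is false. Although the splitting of every node is uniquely determined by the root splitting, two nodes carrying the same unsplit sequent may receive different splittings, because the splitting of a node depends on the whole path from the root to that node, and the paths to $x$ and to $y$ differ.

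The L\"ob proof in Fig.~1 already shows this. Take the circular proof of $\Diamond(\Box P \wedge \overline{P}), \Box P$ and split the root as $\Diamond(\Box P \wedge \overline{P}) \mid \Box P$. Propagating upward through the lower $\mathsf{\Box}$ (as $\mathsf{\Box}_r$), the back-link target node receives the splitting
\[
\Box P \wedge \overline{P},\ \Diamond(\Box P \wedge \overline{P}) \ \mid\ P .
\]
Continuing along the left branch through $\wedge_l$ and then the upper $\mathsf{\Box}$ (which is $\mathsf{\Box}_l$, since the principal $\Box P$ now sits on the left), the leaf receives the splitting
\[
\Box P \wedge \overline{P},\ \Diamond(\Box P \wedge \overline{P}),\ P \ \mid\ \emptyset .
\]
These are different split sequents, so the back-link does not lift verbatim.

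This is exactly why the paper's proof detours through $\infty$-proofs. One unravels $\pi$ to a regular $\infty$-proof and performs the bottom-up splitting there; the resulting split $\infty$-proof still contains only finitely many distinct split sequents (each of the finitely many sequents in $\pi$ admits only finitely many splittings), so one can extract a regular split $\infty$-proof and fold it back into a split circular proof via Proposition~\ref{circ}. The circular proof obtained this way need not have the same underlying tree as $\pi$; in the example above one extra unfolding is required before the split sequents repeat.
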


\begin{proof}
Assume $\pi$ is a circular proof of $\Gamma_1 , \Gamma_2$. By bottom-up splitting of sequents from $\pi$, we obtain a split $\infty$-proof of $\Gamma_1 \: | \: \Gamma_2$. Since there are only finitely many different sequents in $\pi$, there are only finitely many different split sequets in the split $\infty$-proof of $\Gamma_1 \: | \: \Gamma_2$. From any $\infty$-proof with finitely many different sequents, a regular $\infty$-proof can be obtained immediately. Hence, we have a split circular proof of $\Gamma_1 \: | \: \Gamma_2$ by Proposition \ref{circ}.

\end{proof}

Now we state a variant of the salient fixed-point theorem for $\mathsf{GL}$. 
For a set $S$ of literals $L$ and marked literals $L^\circ$, define $S^\circ = \{ L^\circ \colon \text{$L \in S$ or $L^\circ \in S$} \}$. Put $w^* (A) = w(A) \cup w^\circ (A)$. 
\begin{thm} \label{Fixed-point}
Let $A$ be a formula in which $P$ and $\overline{P}$ only occur within the scope of modal operators. Then there is a formula $F$ such that $w (F) \subset (w^* (A)\cup w^*(\overline{A})) \setminus \{ P^\circ, \overline{P^\circ}\}$,  and
\[\mathsf{GL} \vdash \boxdot (P \leftrightarrow A) \leftrightarrow \boxdot (P \leftrightarrow F) \: .\]
Moreover, if $A$ does not contain $\overline{P}$, then $w (F) \subset w^* (A) \setminus \{ P^\circ\}$.
\end{thm}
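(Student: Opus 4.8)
The plan is to prove the fixed-point theorem by extracting an explicit fixed-point formula from a circular proof, in the spirit of the de Jongh--Sambin fixed-point theorem but keeping track of literal occurrences (polarities and the marking). First I would recall the semantic content: since $P$, $\overline{P}$ occur in $A$ only within the scope of modal operators, $\mathsf{GL}$ proves $\boxdot(P\leftrightarrow A)\to(P\leftrightarrow A[\top/\text{something}])$ type fixpoint behaviour, and the standard fixed point is $F := A[\bot/P]$ or, more robustly in the modal case, a formula built by iterating the substitution. The key is that the canonical choice $F = A[F/P]$ unwound finitely many times stabilises modulo $\mathsf{GL}$-provability because each modal box strictly decreases a rank. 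Concretely I would set $F$ to be $A$ with every outermost boxed occurrence of a subformula $\Box C(P)$ or $\Diamond C(P)$ replaced recursively; the termination of this process is exactly the content of the usual fixed-point construction.

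The core argument I would actually run is via the circular/sequent machinery already developed. I would consider the sequents expressing the two implications $\boxdot(P\leftrightarrow A)\to\boxdot(P\leftrightarrow F)$ and its converse, write them as split sequents, and use Lemma~\ref{splitproof} together with the interpolation construction of Fig.~2 to read off $F$ as an interpolant-like object. More precisely, one shows $\mathsf{GL_{circ}}\vdash \overline{\boxdot(P\leftrightarrow A)}, \boxdot(P\leftrightarrow A)$ trivially, then splits so that $P$ and $\overline{P}$ land on opposite sides; the interpolant $C$ produced by the rules of Fig.~2 will not contain $P$ or $\overline{P}$ out of modal scope, and tracking $w(C)$ through the clauses $\mathsf{\wedge}_l,\mathsf{\vee}_l,\mathsf{\Box}_l$ etc.\ gives the literal bound $w(F)\subset (w^*(A)\cup w^*(\overline A))\setminus\{P^\circ,\overline{P}^\circ\}$. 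The extra marking (the $(-)^\circ$ closure in $w^*$) appears precisely because the $\mathsf{\Box}_l$ and $\mathsf{\Box}_r$ rules put $\Diamond\Gamma_1$ on both sides, duplicating literals under a modality; I would need Lemma~\ref{splitproof} to guarantee the interpolant-extraction terminates (circular, not merely $\infty$-, proof) and then check that $\boxdot(P\leftrightarrow F)$ genuinely interpolates, i.e.\ both implications are $\mathsf{GL}$-provable, which follows because $F$ is a fixed point semantically and $\mathsf{GL}$ is complete.

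For the "moreover" clause, when $A$ does not contain $\overline P$ one does the same construction but only ever needs to split off $P$ (never $\overline P$), so the bound improves to $w(F)\subset w^*(A)\setminus\{P^\circ\}$; this is a matter of checking that no rule of Fig.~2 introduces $\overline P$-related literals if none were present, using that $\overline A$ then contains no $P$.

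The main obstacle I expect is twofold. First, establishing termination/regularity of the extraction: one must argue that the circular proof of the relevant sequent, when split and processed by the interpolation rules, yields a genuine \emph{circular} (finitely presented) interpolant rather than an infinite one — this is where Lemma~\ref{splitproof} and the subformula-finiteness observations from the proof of Theorem~\ref{thm} do the work, but the back-link bookkeeping for the split derivation and the induced interpolant needs care. Second, pinning down exactly which literal occurrences survive under the $\mathsf{\Box}$ rules: because $\mathsf{\Box}_l$ copies $\Diamond\Gamma_1$ into the left premise and $\mathsf{\Box}_r$ copies $\Diamond\Gamma_2$ into the right, literals that were unmarked in $A$ can reappear marked, which is the whole reason the statement uses $w^*$ rather than $w$; verifying that $P^\circ$ and $\overline P^\circ$ are nonetheless eliminated (they cannot be, since $P,\overline P$ occur only under modalities in $A$, so any split that isolates $P$ forces its marked version onto a single side and it gets absorbed into the interpolant construction as $\top$ or $\bot$ at the modal step) is the delicate point. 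Everything else — the Boolean clauses of Fig.~2, the $\boxdot$ manipulations, and the final appeal to Proposition~\ref{prop} and soundness/completeness of $\mathsf{GL}$ — is routine.
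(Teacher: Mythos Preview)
The paper does not prove this theorem at all: immediately after the statement it simply remarks that ``this variant of the theorem can be obtained by a syntactic procedure without applying any kind of interpolation'' and cites Lindstr\"om and Smory\'nski. So the paper's ``proof'' is an appeal to the classical de~Jongh--Sambin construction (with the extra bookkeeping on $w$ and $w^*$), treated as external input.

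Your proposal, by contrast, tries to derive the fixed-point theorem from the interpolation machinery of Section~4, and this is circular in the paper's architecture. In the proof of Theorem~\ref{Lyn}, the interpolant associated to a split circular proof is built by induction on $\kappa$, and the crucial step --- closing a back-link --- invokes Theorem~\ref{Fixed-point} to replace the unknown $X_b$ by a fixed point $F$. In other words, the procedure of Fig.~2 together with Lemma~\ref{splitproof} does \emph{not} by itself yield a finite interpolant; it yields a formula containing fresh atoms $X_a$ standing for the (as yet undetermined) interpolants at back-linked leaves, and eliminating those atoms is exactly what the fixed-point theorem is used for. Your sentence ``I would need Lemma~\ref{splitproof} to guarantee the interpolant-extraction terminates'' conflates two different things: Lemma~\ref{splitproof} guarantees a circular (finitely presented) split \emph{proof}, not a finite interpolant formula. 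Extracting the formula from the cycle is the step you have not supplied, and it is precisely the step where the paper spends the fixed-point theorem.

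If you want a self-contained proof here, you must follow the classical route your first paragraph gestures at (iterated substitution / de~Jongh--Sambin, with an induction on the number of modalised occurrences of $P$) and then separately verify the $w$/$w^*$ bounds; the interpolation apparatus of this paper cannot bootstrap it.
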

Note that this variant of the theorem can be obtained by a syntactic procedure without applying any kind of interpolation (see \cite{Lin96, Smo04}).

For a sequent $\Gamma$, define $\overline{\Gamma}$ as the multiset of negations of formulas from $\Gamma$.  Let $w (\Gamma)$ be the set $ \bigcup \{ w (A) \colon A \in \Gamma \} $. 
\begin{proof}[Proof of Theorem \ref{Lyn}]
It is sufficient to find interpolants for all splittings of provable sequents.
Assume we have a circular proof of  $ \Gamma_1 , \Gamma_2$. Applying Lemma \ref{splitproof}, we find a split circular proof of $ \Gamma_1 \; | \; \Gamma_2 $, and construct an interpolant for this split sequent as follows.

Consider any split circular derivation $\pi=(\kappa, d)$ of $ \Gamma_1 \: | \: \Gamma_2 $.
For any non-axiomatic leaf $a$ of $\kappa$, we fix two unknowns $X_a$ and $w_a$. The unknown atom $X_a$ stands for an interpolant of the split sequent in the leaf $a$. The intended interpretation of $w_a$ is $w(X_a)$. Let the split sequent of the leaf $a$ be $\Delta_1 \: | \: \Delta_2$. Define $I_a$ as the formula $(\overline{(\Delta^\sharp_1)} \rightarrow X_a) \wedge (X_a \rightarrow \Delta^\sharp_2)$ and $I_a^\prime$ as the statement $w_a \subset (w(\overline{\Delta}_1) \cap w( \Delta_2))$. For a formula $D$, which can contain new atoms $X_a$, let $w_X(D)$ be $ w(D) \setminus \{X_a, X^\circ_a \colon \text{$X_a$ occurs in $D$}\}$. 


Now we claim that there is a formula $C$ such that $C$ does not contain literals of the form $\overline{X}_a$, atoms $ \{ X_a \colon a \in BH(\pi)\}$ can occur in $C $ only within the scope of modal operators, and
\begin{multline*}
\mathsf{GL} \vdash \bigwedge \lbrace \boxdot I_a \colon a \in H(\pi) \rbrace \wedge \bigwedge \lbrace \Box I_a \colon a \in BH(\pi) \rbrace \rightarrow  ((\overline{(\Gamma_1^\sharp)} \rightarrow C) \wedge (C \rightarrow \Gamma^\sharp_2)) \:,
\end{multline*}
\begin{multline}\label{eq:var}
\bigwedge\{ I^\prime_a \colon a \in  H(\pi) \cup BH(\pi)\} \Longrightarrow  w_X(C) \\ \cup 
\bigcup \{ w_a \colon a \in H(\pi) \} \cup \bigcup \{ w^\circ_a \colon a \in BH(\pi) \}  \subset w(\overline{\Gamma}_1) \cap w( \Gamma_2)\:.
\end{multline}
The proof is by induction on the structure of $\kappa$. 

Suppose the derivation $\kappa$ consists of a single sequent. 
If the single leaf has the form of an initial sequent:
\begin{gather*}
\AXC{ $(\top) \;\; \Gamma_1 \; | \; \top, \Gamma_2$}
\DisplayProof  \qquad
\AXC{ $(\bot) \;\; \Gamma_1 , \top  \; | \; \Gamma_2$}
\DisplayProof \\
\AXC{$(\bot) \;\; \Gamma_1, A, \overline{A} \; | \; \Gamma_2$}
\DisplayProof  \qquad
\AXC{$(\overline{A}) \;\;\Gamma_1, A \; | \;\overline{A} , \Gamma_2$}
\DisplayProof  \qquad
\AXC{$(\top) \;\;\Gamma_1 \; | \; A, \overline{A}, \Gamma_2 $}
\DisplayProof \:,
\end{gather*}
then the formula in parentheses is $C$.
If the single leaf $a$ is non-axiomatic, then $C$ is equal to $X_a$.

Otherwise, there exists the last application of an inference rule in $\kappa$. Consider the case that the conclusion of $\kappa$ is not connected by the back-link function. For instance, the last application of an inference rule has the form:
\begin{gather*}
\AXC{$\tau_1$}
\noLine
\UIC{\vdots}
\noLine
\UIC{$ \Gamma_1 \; | \; \Gamma_2 , A $}
\AXC{$\tau_2$}
\noLine
\UIC{\vdots}
\noLine
\UIC{$\Gamma_1 \; | \; \Gamma_2 , B $}
\LeftLabel{$\mathsf{\wedge}_r$}
\RightLabel{\:.}
\BIC{$\Gamma_1 \; | \; \Gamma_2 , A \wedge B$}
\DisplayProof 
\end{gather*}
By the induction hypothesis, there are formulas $C_1$ and $C_2$, which satisfy all the required conditions for $(\tau_1, d_{\tau_1})$ and $(\tau_2, d_{\tau_2})$, respectively.
We simply put $C \equiv C_1\wedge C_2$. The cases of other inference rules are completely analogous. The construction of $C$ for other rules is presented in parentheses in Fig. 2.

The only remaining case is that the conclusion of $\kappa$ is connected by the back-link function. By $b$ denote the non-axiomatic leaf connected by the back-link function with the root of $\kappa$. From the previous cases, there is a formula $C_0$ which satisfies the required conditions for the circular derivation $\pi_0$ obtained from $\pi$ by erasing the link connecting $b$ with the root of $\kappa$. Notice that $b \in BH(\pi_0)$. The formula $C_0$ does not contain literals of the form $\overline{X}_a$, and atoms $ \{ X_a \colon a \in BH(\pi)\} \cup \{ X_b\}$ can occur in $C_0 $ only within the scope of modal operators. In addition, we have the following:
\begin{multline*}
\mathsf{GL} \vdash \bigwedge \lbrace \boxdot I_a \colon a \in H(\pi) \rbrace \wedge \bigwedge \lbrace \Box I_a \colon a \in BH(\pi) \rbrace \wedge \Box I_b \\ \rightarrow ((\overline{(\Gamma_1^\sharp)} \rightarrow C_0) \wedge (C_0 \rightarrow \Gamma^\sharp_2)) \:,
\end{multline*}
\begin{multline}\label{eq:indvar}
\bigwedge\{ I^\prime_a \colon a \in  H(\pi) \cup BH(\pi)\}  \wedge I^\prime_b \Longrightarrow  w_X(C_0) \cup w_b^\circ \\ \cup 
\bigcup \{ w_a \colon a \in H(\pi) \} \cup \bigcup \{ w^\circ_a \colon a \in BH(\pi) \}  \subset w(\overline{\Gamma}_1) \cap w( \Gamma_2)\:.
\end{multline}
From Theorem \ref{Fixed-point}, there is a formula $F$ such that $w (F) \subset w^* (C_0) \setminus \{ X_b^\circ\}$ and
\[\mathsf{GL} \vdash \boxdot (X_b \leftrightarrow C_0) \leftrightarrow \boxdot (X_b \leftrightarrow F) \: .\]
Let us substitute $X_b$ in $C_0$ with $F$ and denote the result by $C$. 

We claim that $C$ satisfies the required conditions for $\pi$. Obviously, $C$ does not contain literals of the form $\overline{X}_a$, and atoms $ \{ X_a \colon a \in BH(\pi)\}$ occur in $C $ only within the scope of modal operators. 

Furthermore, notice that  $\mathsf{GL} \vdash I_b(F) \leftrightarrow ((\overline{(\Gamma_1^\sharp)} \rightarrow C) \wedge (C \rightarrow \Gamma^\sharp_2))$.
Applying admissibility of the L\"{o}b rule to the formula
\begin{multline*}
\mathsf{GL} \vdash \bigwedge \lbrace \boxdot I_a \colon a \in H(\pi) \rbrace \wedge \bigwedge \lbrace \Box I_a \colon a \in BH(\pi) \rbrace \wedge \Box I_b (F) \\\rightarrow  ((\overline{(\Gamma_1^\sharp)} \rightarrow C) \wedge (C \rightarrow \Gamma^\sharp_2)) \:,
\end{multline*}
we see that the assumption $\Box I_b(F)$ can be dropped out. 

It remains to check condition \eqref{eq:var}. Assume $\bigwedge\{ I^\prime_a \colon a \in  H(\pi) \cup BH(\pi)\}$. We claim that $w_X(C)  \subset w(\overline{\Gamma}_1) \cap w( \Gamma_2)$.
From \eqref{eq:indvar} we have 
\begin{gather*}
I_b^\prime \Rightarrow w_X(C_0) \cup w^\circ_b \subset w(\overline{\Gamma}_1) \cap w( \Gamma_2) \:.
\end{gather*} 
Substituting $w_b$ in $I_b^\prime$ with $\emptyset$, we get  $w_X(C_0) \subset w(\overline{\Gamma}_1) \cap w( \Gamma_2) $. Thus $I^\prime_b (w_X(C_0))$ is a valid statement.
The second substitution of $w_b$ in $I_b^\prime$ with $w_X(C_0)$, yields that $w_X(C_0) \cup w^\circ_X(C_0) \subset w(\overline{\Gamma}_1) \cap w( \Gamma_2) $. Consequently, 
$$w_X(C) \subset w_X(C_0)\cup w^\circ_X(F) \subset w_X(C_0) \cup w_X^\circ(C_0) \subset w(\overline{\Gamma}_1) \cap w( \Gamma_2)\:.$$
From \eqref{eq:indvar} we also have 
\begin{gather*}
I_b^\prime \Rightarrow \bigcup \{ w_a \colon a \in H(\pi) \} \cup \bigcup \{ w^\circ_a \colon a \in BH(\pi) \}  \subset w(\overline{\Gamma}_1) \cap w( \Gamma_2)\:.
\end{gather*}
Substituting $w_b$ in $I_b^\prime$ with $\emptyset$, we get 
\begin{gather*}
\bigcup \{ w_a \colon a \in H(\pi) \} \cup \bigcup \{ w^\circ_a \colon a \in BH(\pi) \}  \subset w(\overline{\Gamma}_1) \cap w( \Gamma_2)\:.
\end{gather*}
Hence condition \eqref{eq:var} is satisfied. 

Finally, if $\pi= (\kappa, d)$ is a split circular proof of $ \Gamma_1 \: | \: \Gamma_2 $, then the constructed formula $C$ is indeed an interpolant for this split sequent.
\end{proof}

\paragraph*{Acknowledgements.} I thank Lev Beklemishev and Maria Filatova for their 
personal support and helpful advises during the work on the paper. Thanks, Lev! A warm thank you, Maria!
\bibliographystyle{amsplain}
\bibliography{OnGL}

\end{document}